\documentclass[12pt, reqno]{amsart}
\usepackage{amsmath,amssymb,amsthm}

\textheight=22cm
\textwidth=16.5cm
\hoffset=-1.5cm
\topmargin=-0.4cm 
 2

\usepackage[colorlinks=true,linkcolor=blue,citecolor=blue,pdfpagelabels=false]{hyperref}

\newtheorem{thm}{Theorem}[section]
\newtheorem{lem}[thm]{Lemma}
\newtheorem{prop}[thm]{Proposition}
\newtheorem{cor}[thm]{Corollary}

\newcommand{\thmref}[1]{Theorem~\ref{#1}}
\newcommand{\lemref}[1]{Lemma~\ref{#1}}

\newcommand{\propref}[1]{Proposition~\ref{#1}}
\newcommand{\corref}[1]{Corollary~\ref{#1}}

\theoremstyle{remark}
\newtheorem{rmk}{Remark}[section]

\newenvironment{acknowledgements}{\bigskip\textbf{Acknowledgements.}}{}

\renewcommand{\geq}{\geqslant}
\renewcommand{\leq}{\leqslant}

\begin{document}
\title[Simultaneous sign changes]
{The first simultaneous sign change for Fourier coefficients of Hecke-Maass forms}
\author[M. Kumari, J. Sengupta]{Moni Kumari and Jyoti Sengupta}
\address{ 
School of Mathematics, Tata Institute of Fundamental Research, 1, Homi Bhabha Road, Mumbai, 400005, India.}
\email{moni@math.tifr.res.in}

\address{Jyoti Sengupta, Department of Mathematics, Vivekananda University, Belur, West Bengal, India.}
\email{sengupta@math.tifr.res.in}

\date{\today}

\subjclass[2010]{Primary 11F41; Secondary 11F30.}

\keywords{Maass forms, Fourier coefficients, sign changes}
\maketitle

\begin{abstract}
Let $f$ and $g$ be two Hecke-Maass cusp forms of weight zero for $SL_2(\mathbb Z)$ with Laplacian eigenvalues $\frac{1}{4}+u^2$ and $\frac{1}{4}+v^2$, respectively. Then both have real Fourier coefficients say, $\lambda_f(n)$ and $\lambda_g(n)$, and we may normalize $f$ and $g$ so that $\lambda_f(1)=1=\lambda_g(1)$. In this article, we first prove that the sequence $\{\lambda_f(n)\lambda_g(n)\}_{n \in \mathbb{N}}$ has infinitely many sign changes. Then we derive a bound for the first negative coefficient for the same sequence in terms of the Laplacian eigenvalues of $f$ and $g$. 
\end{abstract}

\section{Introduction and main results}
In recent times studying questions related to sign changes for Fourier coefficients of an automorphic form, in particular modular form and Maass form are very much in trend.
For $k\in \mathbb Z $, we denote by $S_k$ the space of cusp forms (holomorphic) of weight $k$ for the full modular group$SL_2(\mathbb Z)$.
Let $f \in S_k$ be a Hecke cusp form with Fourier coefficients $\lambda_f(n)$, then
using a classical theorem of Landau and certain analytic properties of the associated $L$-function $L_{f}(s)$ of $f$, one can show that the sequence $\{\lambda_f(n)\}_{n\in \mathbb{N}}$ has infinitely many sign changes. 
In 1969, Siegel \cite{Siegel} proved that if $k\equiv 2\pmod{4},$ then if one goes upto dim$S_{k}+1$ then there will be at least one negative Fourier coefficient $\lambda_{f}(n)$. In other words, let $n_f$ be the smallest positive integer $n$ such that $\lambda_f(n)<0,$ then Siegel's result says that $n_f\leq \mbox{dim}S_{k}+1.$ The best known bound for $n_f$ so far (in our knowledge) is due to Matom$\ddot{a}$ki \cite{Mato} and she proved that $$n_f\ll k^{3/4}.$$

The analogous question about infinitely many sign changes for Fourier coefficients of a Hecke-Maass form $f$ was pointed out by Knopp, Kohnen and Pribitkin \cite{kkp} at the end of their paper. In 2010, Yan \cite{yan} proved that, in this case, there exist a positive integer
$$n\ll (3+|u|^2)^{1/2-\delta}$$ 
such that $\lambda_f(n)<0$, where $\delta$ is a positive constant. 
Later, these problems have been generalised by several authors for other class of automorphic forms.

Instead of one cusp form, one can take more than one cusp form (holomorphic or non-holomorphic) and ask the analogous questions. 
 Regarding the case of two holomorphic 
cusp forms, there are a number of interesting results (provided as examples) in \cite{pri}.
The authors of \cite{gun} and \cite{koh3} worked with two (holomorphic) cusp forms and essentially they proved that the sequence $\{\lambda_f(n)\lambda_{g}(n)\}_{n\in \mathbb{N}}$ 
has infinitely many sign changes provided $\lambda_{f}(1)\lambda_{g}(1)\neq 0,$ where $\lambda_f(n)$ and $\lambda_{g}(n)$ are the normalized Fourier coefficients of cusp forms $f$ and $g$, respectively.
Recently, the first author together with M. Ram Murty \cite{Moni} proved that the sequence $\{\lambda_f(n)\lambda_{g}(n)\}_{n\in \mathbb{N}}$ has infinitely many sign changes without assuming the hypothesis $\lambda_{f}(1)\lambda_{g}(1)\neq 0$ and there they also gave a quantitative result for simultaneous sign change in short intervals. 
In \cite{llw} Lau, Liu and Wu investigated the problem of the first sign change for
the sequence $\{\lambda_f(n)\lambda_{g}(n)\}_{n\in \mathbb{N}}$ when $f$ and $g$ are holomorphic cusp forms.

In this article, we study the analogous questions for two distinct Hecke-Maass cusp forms.
We start by showing that the sequence $\{\lambda_f(n)\lambda_{g}(n)\}_{n\in \mathbb{N}}$ has infinitely many sign changes.

\begin{thm}\label{signc}
Let $f$ and $g$ be two normalized Hecke-Maass cusp forms with Fourier coefficients $\lambda_f(n)$ and $\lambda_g(n)$ respectively. Then for any sufficiently large $x$, the interval $(x,2x]$ contain positive integers
\begin{enumerate}
\item 
$n$ such that $\lambda_f(n)\lambda_g(n)<0,$
and 
\item
$m$ such that $\lambda_f(m)\lambda_g(m)>0.$
\end{enumerate}
\end{thm}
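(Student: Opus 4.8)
The plan is to play two averages of $\lambda_f(n)\lambda_g(n)$ over the dyadic block $(x,2x]$ against each other: the signed average, which will be small because the Rankin--Selberg $L$-function $L(f\times g,s)$ is entire, and the average of $\lambda_f(n)^2\lambda_g(n)^2$, which will be large because a fourth-moment Dirichlet series has a pole at $s=1$. Here I assume $f\neq g$ (if $f=g$ then $\lambda_f(n)\lambda_g(n)=\lambda_f(n)^2\geq 0$ and the statement is false), and I use that level-one Hecke--Maass cusp forms are self-dual, non-dihedral, with real multiplicative coefficients; write $\theta=7/64$ for the Kim--Sarnak bound, so that $\lambda_f(n),\lambda_g(n)\ll_\varepsilon n^{\theta+\varepsilon}$.

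First I would record the cancellation in the signed sum. Setting $D(s)=\sum_{n\geq1}\lambda_f(n)\lambda_g(n)n^{-s}$, one has $D(s)=L(f\times g,s)/\zeta(2s)$ for $\mathrm{Re}(s)>1$; since $f$ and $g$ are distinct self-dual cusp forms the degree-four $L$-function $L(f\times g,s)$ is entire, and as $\zeta(2s)$ is holomorphic and zero-free for $\mathrm{Re}(s)>1/2$, the function $D(s)$ continues holomorphically to $\mathrm{Re}(s)>1/2$. I would then apply a truncated Perron formula at height $T$ (the error being $O(x^{1+\varepsilon}/T)$, since $\sum_{n\leq y}|\lambda_f(n)\lambda_g(n)|\ll y$ by Cauchy--Schwarz and Rankin--Selberg), move the contour to $\mathrm{Re}(s)=\tfrac12+\delta$ crossing no pole, and estimate the new integral by the convexity bound $L(f\times g,\tfrac12+\delta+it)\ll(1+|t|)^{1-2\delta+\varepsilon}$ together with $1/\zeta(1+2\delta+2it)\ll_\delta 1$; optimising $T$ this yields
\[
\sum_{n\leq x}\lambda_f(n)\lambda_g(n)\ll_\varepsilon x^{3/4+\varepsilon},\qquad\text{hence}\qquad\Bigl|\sum_{x<n\leq2x}\lambda_f(n)\lambda_g(n)\Bigr|\ll_\varepsilon x^{3/4+\varepsilon}.
\]
(Any exponent below $25/32$ suffices, so a general bound for coefficient sums of entire $L$-functions could be quoted instead.)

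Next I would produce the lower bound. The Dirichlet series $\sum_{n\geq1}\lambda_f(n)^2\lambda_g(n)^2n^{-s}$ has an Euler product that, up to a factor holomorphic and non-vanishing for $\mathrm{Re}(s)>1/2$, equals $\zeta(s)\,L(\mathrm{sym}^2 f,s)\,L(\mathrm{sym}^2 g,s)\,L(\mathrm{sym}^2 f\times\mathrm{sym}^2 g,s)$; this uses that $\mathrm{sym}^2 f$ and $\mathrm{sym}^2 g$ are cuspidal on $GL_3$ (Gelbart--Jacquet, since the forms are non-dihedral) and that $f\neq g$ at level one forces $\mathrm{sym}^2 f\not\cong\mathrm{sym}^2 g$, so that the last factor is entire and non-zero at $s=1$. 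Consequently this series has a simple pole at $s=1$ with residue a product of positive quantities and is otherwise regular on $\mathrm{Re}(s)\geq1$; since its coefficients are non-negative and $\sum_{n\leq x}\lambda_f(n)^2\lambda_g(n)^2\ll x(\log x)^3$, the Wiener--Ikehara theorem gives $\sum_{n\leq x}\lambda_f(n)^2\lambda_g(n)^2\sim cx$ with $c>0$, and therefore $\sum_{x<n\leq2x}\lambda_f(n)^2\lambda_g(n)^2=cx+o(x)\gg x$.

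To conclude I would split $(x,2x]$ by the sign of $\lambda_f(n)\lambda_g(n)$, writing $P$ for the sum of the positive terms and $N$ for the sum of the absolute values of the negative ones. Since $|\lambda_f(n)\lambda_g(n)|\ll x^{2\theta+\varepsilon}$ for $n\leq2x$, the lower bound gives
\[
P+N=\sum_{x<n\leq2x}|\lambda_f(n)\lambda_g(n)|\geq x^{-2\theta-\varepsilon}\sum_{x<n\leq2x}\lambda_f(n)^2\lambda_g(n)^2\gg x^{1-2\theta-\varepsilon}=x^{25/32-\varepsilon},
\]
while $|P-N|\ll_\varepsilon x^{3/4+\varepsilon}$ by the first step. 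As $25/32>3/4$, for all large $x$ we get $P\geq\tfrac12(P+N)-\tfrac12|P-N|\gg x^{25/32-\varepsilon}>0$ and likewise $N\gg x^{25/32-\varepsilon}>0$, so $(x,2x]$ contains both an $m$ with $\lambda_f(m)\lambda_g(m)>0$ and an $n$ with $\lambda_f(n)\lambda_g(n)<0$. The combinatorial last step is routine; the real content, and the main obstacle, is assembling the analytic input of the first two steps --- the analytic continuation, functional equation and convexity bound for $L(f\times g,s)$, and the precise location and order of the pole of the fourth-moment series (resting on the automorphy of symmetric squares and on $GL_3$ Rankin--Selberg theory) --- and arranging that the power saving in the signed sum comfortably beats the loss $x^{2\theta}$ incurred from using only the Kim--Sarnak bound in place of Ramanujan.
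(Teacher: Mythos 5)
Your proposal is correct and follows the same overall strategy as the paper: bound the signed dyadic sum $\sum_{x<n\leq 2x}\lambda_f(n)\lambda_g(n)$ from above by exploiting the holomorphy of $L(f\otimes g,s)$, bound the fourth--moment sum $\sum_{x<n\leq 2x}\lambda_f^2(n)\lambda_g^2(n)$ from below via the polar factorization through $\zeta(s)L(\mathrm{sym}^2f,s)L(\mathrm{sym}^2g,s)L(\mathrm{sym}^2f\otimes \mathrm{sym}^2g,s)$, and bridge the two with the Kim--Sarnak bound, which is exactly why the exponent $25/32=1-2\cdot\frac{7}{64}$ appears in both arguments. The two inputs are, however, obtained by different means. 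For the upper bound the paper invokes Ramakrishnan's modularity of $L(f\otimes g,s)$ on $GL_4$ together with the Meher--Murty oscillation theorem, which gives the exponent $12/17+\epsilon$, whereas you derive $3/4+\epsilon$ directly from truncated Perron, a contour shift to $\mathrm{Re}(s)=\frac12+\delta$, and convexity; since both $12/17$ and $3/4$ are below $25/32$, either suffices, and your route is more self-contained (though it still needs the analytic continuation and functional equation of $L(f\otimes g,s)$, i.e.\ essentially the same automorphic input). For the lower bound the paper uses a weighted Perron formula with the kernel $\log^2(x/n)$ and a contour shift, obtaining $\gg x/\log^2 x$, whereas you apply Wiener--Ikehara to get the sharper asymptotic $\sim cx$; both rest on the same facts about the cuspidality of $\mathrm{sym}^2 f$, the non-isomorphism $\mathrm{sym}^2f\not\cong\mathrm{sym}^2g$ at level one, and the non-vanishing of the auxiliary Euler factor $U(1)$ (a point you, like the paper, assert rather than verify in detail). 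Finally, your direct splitting into positive part $P$ and negative part $N$ is logically equivalent to the paper's proof by contradiction. Your observation that one must assume $f\neq g$ is well taken; the theorem is false for $f=g$ and the paper leaves this hypothesis implicit.
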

Therefore, we have an immediate corollary.
\begin{cor}
There are infinitely many sign changes in the sequence $\{\lambda_f(n)\lambda_g(n)\}_{n\in \mathbb{N}}$. 
\end{cor}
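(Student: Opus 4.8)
I will prove Theorem~\ref{signc}; the Corollary then follows at once by applying it with $x=2^{j}$ for $j=1,2,\dots$, which produces infinitely many disjoint dyadic blocks each containing both a positive and a negative value of $\lambda_f(n)\lambda_g(n)$. The overall strategy is to quantify the usual mechanism behind sign–change statements: the summatory function $\sum_{n\le x}\lambda_f(n)\lambda_g(n)$ enjoys a power saving, whereas $\sum_{n\le x}|\lambda_f(n)\lambda_g(n)|$ grows like a positive multiple of $x$; if the product $\lambda_f(n)\lambda_g(n)$ kept a single sign throughout an entire block $(x,2x]$ then on that block these two sums would coincide, which is impossible for $x$ large.

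\emph{Step 1 (generating series) and Step 2 (cancellation).} I would start from the Rankin--Selberg identity
\[
\sum_{n=1}^{\infty}\frac{\lambda_f(n)\lambda_g(n)}{n^{s}}=\frac{L(s,f\times g)}{\zeta(2s)}\qquad(\Re s>1),
\]
where $L(s,f\times g)$ is the degree–four Rankin--Selberg $L$-function of $f$ and $g$. Since $f$ and $g$ are \emph{distinct} Hecke--Maass cusp forms for $SL_2(\mathbb{Z})$, the function $L(s,f\times g)$ continues to an entire function, has a functional equation relating $s$ and $1-s$ with archimedean factors built from the spectral parameters $u,v$, and is of polynomial growth in vertical strips; as $\zeta(2s)\ne 0$ for $\Re s\ge\tfrac12$, the right–hand side is holomorphic for $\Re s\ge\tfrac12$. (This is the one place distinctness enters: for $f=g$ there is a pole at $s=1$, and indeed $\lambda_f(n)^2\ge 0$ never changes sign.) Applying Perron's formula and shifting the contour to $\Re s=\tfrac12+\varepsilon$, which crosses no pole, and using the convexity bound for $L(s,f\times g)$ on vertical lines together with $|\zeta(1+it)|^{-1}\ll\log(2+|t|)$, I expect to obtain $\sum_{n\le x}\lambda_f(n)\lambda_g(n)\ll_{f,g} x^{3/4+\varepsilon}$ (a crude balancing of the truncation error already suffices; any fixed power saving would do). In particular $\bigl|\sum_{n\in(x,2x]}\lambda_f(n)\lambda_g(n)\bigr|\ll x^{3/4+\varepsilon}$.

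\emph{Step 3 (lower bound) and Step 4 (conclusion).} For the matching lower bound I would pass to the second moment. The multiplicative function $\lambda_f(n)^2\lambda_g(n)^2$ has
\[
\sum_{n=1}^{\infty}\frac{\lambda_f(n)^2\lambda_g(n)^2}{n^{s}}=\zeta(s)\,L(s,\mathrm{sym}^2 f)\,L(s,\mathrm{sym}^2 g)\,L(s,\mathrm{sym}^2 f\times\mathrm{sym}^2 g)\,U(s),
\]
with $U(s)$ an Euler product converging absolutely for $\Re s>\tfrac12$; since $f\ne g$, none of the three $L$-factors has a pole at $s=1$, so the only singularity at $s=1$ is the simple pole of $\zeta(s)$, and non-negativity of the coefficients (together with the divergence of $\sum_p \lambda_f(p)^2\lambda_g(p)^2p^{-1}$) forces the residue to be positive. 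Hence $\sum_{n\le x}\lambda_f(n)^2\lambda_g(n)^2=c\,x+O(x^{1-\eta})$ with $c>0$, so $\sum_{n\in(x,2x]}\lambda_f(n)^2\lambda_g(n)^2\gg x$. Combining this with the Kim--Sarnak bound $|\lambda_f(n)|,|\lambda_g(n)|\ll n^{7/64+\varepsilon}$, whence $\max_{n\le 2x}|\lambda_f(n)\lambda_g(n)|\ll x^{7/32+\varepsilon}$, gives
\[
\sum_{n\in(x,2x]}|\lambda_f(n)\lambda_g(n)|\ \ge\ \frac{\sum_{n\in(x,2x]}\lambda_f(n)^2\lambda_g(n)^2}{\max_{n\le 2x}|\lambda_f(n)\lambda_g(n)|}\ \gg\ x^{25/32-\varepsilon}.
\]
Since $\tfrac{25}{32}>\tfrac34$, choosing $\varepsilon$ small enough forces $x^{25/32-\varepsilon}$ to dominate the bound of Step 2 for all large $x$. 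Now if $\lambda_f(n)\lambda_g(n)\ge 0$ for every $n\in(x,2x]$, then $\sum_{n\in(x,2x]}\lambda_f(n)\lambda_g(n)=\sum_{n\in(x,2x]}|\lambda_f(n)\lambda_g(n)|$, contradicting Step 2; this produces the integer $n$ in part~(1). Applying the identical comparison to $-\lambda_f(n)\lambda_g(n)$ produces the integer $m$ in part~(2).

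\emph{Main obstacle.} The genuine analytic content sits in Step 2: one must develop the Rankin--Selberg theory for Maass forms in the precise form needed — the integral representation, the exact gamma factors in $u$ and $v$, the (here holomorphic) continuation of $L(s,f\times g)$, and Phragmén--Lindelöf control of its growth in vertical strips — and, crucially, verify that the Perron integral carries \emph{no} main term, which is exactly the hypothesis $f\ne g$. A secondary technical point is the positivity of the constant $c$ in Step 3, that is, non-vanishing on $\Re s=1$ of the symmetric-square and symmetric-square Rankin--Selberg $L$-functions entering the factorization; this is standard but should be stated carefully, as is the uniformity in $x$ of the error term $O(x^{1-\eta})$ that makes the short-interval lower bound $\sum_{n\in(x,2x]}\lambda_f(n)^2\lambda_g(n)^2\gg x$ legitimate.
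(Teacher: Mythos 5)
Your argument is correct and follows essentially the same route as the paper: assume a constant sign on $(x,2x]$, bound $\sum_{x<n\leq 2x}\lambda_f(n)\lambda_g(n)$ from above by a power saving using the analytic properties of $L(f\otimes g,s)/\zeta(2s)$, bound it from below by $x^{25/32-\epsilon}$ by combining the Rankin--Selberg asymptotic for $\sum_{x<n\leq 2x}\lambda_f^2(n)\lambda_g^2(n)\gg x$ with the Kim--Sarnak bound, and compare. The only real difference is that the paper imports the upper bound $\ll x^{12/17+\epsilon}$ from Meher--Murty (via Ramakrishnan's modularity) rather than re-deriving a cruder $x^{3/4+\epsilon}$ by Perron and convexity; since both exponents lie below $25/32$, either choice closes the contradiction.
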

Now we state our next result concerning the problem for the first negative coefficient in the sequence
$\{\lambda_f(n)\lambda_{g}(n)\}_{n\in \mathbb{N}}$. Let $n_{f,g}$ denotes the smallest positive integer $n$ such that $\lambda_f(n)\lambda_{g}(n)<0$. We bound the size of $n_{f,g}$ in terms of the Laplacian eigenvalues of $f$ and $g$ by proving the following theorem (see also preliminaries).

\begin{thm}\label{firstn}
Let $f$ and $g$ be two normalized Hecke-Maass cusp forms (of weight zero) for $SL_2(\mathbb{Z})$ with Laplacian eigenvalues $\frac{1}{4}+u^2$ and $\frac{1}{4}+v^2$, respectively. Then 
\begin{equation*}
n_{f,g}\ll \max \left \{ {{\rm exp}} \big(\tilde{c}\log ^2(\sqrt{{\rm{max}}\{q(sym^2 f),q(sym^2 g)\}})\big),(1+u+v)^{5.34}\right\},
\end{equation*}
where the implied constant is absolute. Here $\tilde{c}$ is an absolute constant and $q(sym^2 f)$, $q(sym^2 g)$ are the analytic conductor of the symmetric square $L$-function associated to $f$ and $g$ respectively. We have $q(sym^2 f)=3(3+u)^2$ and $q(sym^2 g)=3(3+v)^2$.
\end{thm}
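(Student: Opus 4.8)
The plan is to argue by contradiction. Fix a large real $x$, suppose $\lambda_f(n)\lambda_g(n)\ge 0$ for every integer $n\le x$, and show this forces $x$ below the asserted bound; taking $x=n_{f,g}-1$ then proves the theorem (existence of $n_{f,g}$ being \thmref{signc}). Write $a_n=\lambda_f(n)\lambda_g(n)$, $b_n=\lambda_f(n)^2\lambda_g(n)^2=a_n^2$, put $Q=\max\{q(sym^2 f),q(sym^2 g)\}$, and observe that since $f,g$ have level one, $\sqrt Q\asymp 3+\max\{u,v\}\asymp 1+u+v$. The two relevant Dirichlet series are
\[
D(s)=\sum_{n\ge1}\frac{a_n}{n^{s}}=\frac{L(s,f\times g)}{\zeta(2s)},\qquad
B(s)=\sum_{n\ge1}\frac{b_n}{n^{s}}=\zeta(s)\,L(s,sym^2 f)\,L(s,sym^2 g)\,L(s,sym^2 f\times sym^2 g)\,H(s),
\]
the factorisation of $B$ following from $\lambda_f(p)^2=1+\lambda_{sym^2 f}(p)$, so that $b_p=\bigl(1+\lambda_{sym^2 f}(p)\bigr)\bigl(1+\lambda_{sym^2 g}(p)\bigr)$, with $H$ an Euler product that is holomorphic, bounded and $\asymp 1$ in a fixed half-plane $\mathrm{Re}(s)>\tfrac12+\delta_0$. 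Throughout I recover the smoothed partial sums from these series by Perron's formula with the kernel $x^{s}/\bigl(s(s+1)(s+2)\bigr)$, which reproduces $\tfrac12\sum_{n<x}c_n(1-n/x)^2$ for a sequence $(c_n)$ and whose $|s|^{-3}$-decay is exactly enough to make the shifted contour integrals below converge absolutely.

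Step 1 (upper bound from the line $\mathrm{Re}(s)=\tfrac12$). Since $f\ne g$, $L(s,f\times g)$ is entire, and $\zeta(2s)$ is zero-free on $\mathrm{Re}(s)\ge\tfrac12$ (the pole at $s=\tfrac12$ only makes $D$ vanish there), so $D$ is holomorphic and of polynomial growth on $\mathrm{Re}(s)\ge\tfrac12$. Moving the contour for the $a_n$-sum to $\mathrm{Re}(s)=\tfrac12$ crosses no residue (in particular no pole at $s=1$), and inserting the convexity bound $L(\tfrac12+it,f\times g)\ll\bigl((1+|t|)(1+u+v)\bigr)^{1+\varepsilon}$ together with $1/|\zeta(1+2it)|\ll\log(2+|t|)$ gives, after the convergent $t$-integration,
\[
\Bigl|\sum_{n<x}a_n(1-n/x)^2\Bigr|\ \ll\ x^{1/2}\,(1+u+v)^{c_1}
\]
for an explicit $c_1$. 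Step 2 (converting to the square — this is where positivity is used). By the Kim--Sarnak bound $|\lambda_f(n)|,|\lambda_g(n)|\ll n^{7/64+\varepsilon}$ one has, for every $n\le x$, $b_n=a_n\cdot a_n\le a_n\max_{m\le x}|a_m|\ll x^{7/32+\varepsilon}\,a_n$ — here $a_n\ge0$ is essential — so, summing against the weight and using Step~1,
\[
\sum_{n<x}b_n(1-n/x)^2\ \ll\ x^{7/32+\varepsilon}\sum_{n<x}a_n(1-n/x)^2\ \ll\ x^{7/32+\frac12+\varepsilon}\,(1+u+v)^{c_1}.
\]

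Step 3 (a genuine main term for the square; positivity is no longer used). Since $f,g$ are distinct level-one Maass forms, $sym^2 f\not\cong sym^2 g$, so $L(s,sym^2 f\times sym^2 g)$ is holomorphic and non-zero at $s=1$ (Jacquet--Shalika), $L(1,sym^2 f),L(1,sym^2 g)\ne0$, and $H(1)\asymp1$; hence $B$ has a simple pole at $s=1$ with residue $r_{f,g}=L(1,sym^2 f)L(1,sym^2 g)L(1,sym^2 f\times sym^2 g)H(1)>0$, and effective lower bounds for the central factors (absence of Siegel zeros — Hoffstein--Lockhart for the symmetric squares, and for the degree-nine factor the Rankin--Selberg-square trick, e.g.\ via $L\bigl(s,(f\boxtimes g)\times\widetilde{f\boxtimes g}\bigr)$ with $f\boxtimes g$ cuspidal on $GL_4$) give $r_{f,g}\gg(1+u+v)^{-\varepsilon}$. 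Shifting the contour for the $b_n$-sum across $s=1$ onto a standard zero-free region for these $L$-functions (the kernel's decay controlling the truncation) yields $\sum_{n<x}b_n(1-n/x)^2=\tfrac13 r_{f,g}\,x+(\text{error})$; balancing the main term $\tfrac13 r_{f,g}\,x$ against the error — whose size is governed by the zero-free region and by polynomial bounds for the constituent $L$-functions — forces the main term to dominate only once $x\ge x_0:=\exp\!\bigl(\tilde c\log^{2}\sqrt Q\bigr)$, and this threshold is the source of the first term in the theorem. For $x\ge x_0$ one then has $\sum_{n<x}b_n(1-n/x)^2\gg x(1+u+v)^{-\varepsilon}$.

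Step 4 (combination, and the main difficulty). Still assuming $\lambda_f(n)\lambda_g(n)\ge0$ for all $n\le x$: if $x\ge x_0$, Steps~2 and~3 give $x(1+u+v)^{-\varepsilon}\ll x^{7/32+\frac12+\varepsilon}(1+u+v)^{c_1}$, hence $x^{9/32-\varepsilon}\ll(1+u+v)^{c_1+\varepsilon}$, and tracking the exponents (the $32/9$ coming from the Kim--Sarnak bound, the remainder from $c_1$ and the lower bound for $r_{f,g}$) yields $x\ll(1+u+v)^{5.34}$. Therefore in every case $x<\max\{x_0,\,C(1+u+v)^{5.34}\}$, and since $\sqrt Q\asymp 1+u+v$, absorbing constants gives exactly the asserted bound for $n_{f,g}$. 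The heart of the matter — and the step I expect to be the real obstacle — is making the asymptotic of Step~3 fully effective in the spectral parameters: this needs the analytic continuation, functional equations and convexity bounds for $L(s,sym^2 f)$, $L(s,sym^2 g)$ and the degree-nine $L(s,sym^2 f\times sym^2 g)$, an effective zero-free region for these self-dual higher-degree $L$-functions, and effective lower bounds for them at $s=1$ (non-existence of exceptional real zeros) — it is these inputs that produce the $\exp(\tilde c\log^{2}\sqrt Q)$ term. Steps~1, 2 and~4 are then essentially bookkeeping: a convexity bound, the pointwise Kim--Sarnak estimate, and arithmetic of exponents.
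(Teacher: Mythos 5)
Your overall architecture matches the paper's: argue by contradiction from $\lambda_f(n)\lambda_g(n)\ge 0$ for $n\le x$, get an upper bound $\ll x^{1/2+o(1)}(1+u+v)^{c_1}$ for the smoothed sum of $\lambda_f(n)\lambda_g(n)$ from a convexity bound for $L(f\otimes g,s)$ with explicit spectral dependence (your Step~1 is essentially the paper's Propositions 5.1--5.3, done via Rademacher's Phragm\'en--Lindel\"of theorem there), use positivity plus Kim--Sarnak to trade between $\sum\lambda_f(n)\lambda_g(n)$ and $\sum\lambda_f^2(n)\lambda_g^2(n)$ at the cost of $x^{7/32}$, and extract the threshold $\exp(\tilde c\log^2\sqrt Q)$ from an effective prime-number-theorem input; the exponent arithmetic $x^{9/32}\ll(1+u+v)^{c_1}$ is the same. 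Where you genuinely diverge is the lower-bound half. You run the comparison at the level of all $n$ and then need a fully effective asymptotic $\sum_n\lambda_f^2(n)\lambda_g^2(n)(1-n/x)^2\sim\tfrac13 r_{f,g}x$ with $r_{f,g}\gg(1+u+v)^{-\varepsilon}$, which forces you through the degree-nine $L(s,\mathrm{sym}^2f\times \mathrm{sym}^2g)$: its effective zero-free region, the exclusion of a Siegel zero, and an effective lower bound at $s=1$. You correctly flag this as the obstacle, but you only sketch it, and it is a substantive gap as written --- these inputs exist in the literature (the squared-factor positivity trick for the Siegel zero, Brumley-type bounds at $s=1$) but are far from bookkeeping, and if the lower bound for $r_{f,g}$ were only $(1+u+v)^{-c}$ for a fixed $c>0$ rather than $(1+u+v)^{-\varepsilon}$ your final exponent would exceed $5.34$. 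The paper sidesteps all of this: it restricts the lower bound to integers $pq$ with $p\ne q$ primes, uses the Hecke relation $\lambda_f^2(p)\lambda_g^2(p)\ge\lambda_f^2(p)+\lambda_g^2(p)-1$ (valid under the positivity hypothesis at $p^2$), and then only ever needs the effective PNT for the two \emph{degree-three} symmetric squares, where Hoffstein--Lockhart's no-Siegel-zero theorem is available off the shelf; the degree-nine $L$-function enters only in the proof of Theorem~\ref{signc}, where no effectivity in $u,v$ is required. Your route is viable in principle and would give a comparable (with your claimed $c_1=1+\varepsilon$, even slightly better) exponent, but to count as a proof you would need to supply the effective analytic theory of $L(s,\mathrm{sym}^2f\times\mathrm{sym}^2g)$ that Step~3 presupposes --- or simply adopt the paper's prime-restricted lower bound, which achieves the same end with much lighter machinery.
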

\begin{rmk}
If we assume the Ramanujan's conjecture for Fourier coefficients of $f$ and $g$, i.e., 
$\lambda_f(n), \lambda_g(n)\ll_\epsilon n^\epsilon$, then following the method of this paper one can show that 
$$n_{f,g}\ll_{\epsilon}  \max \left \{{\rm exp} \big(\tilde{c}\log ^2(\sqrt{{\rm{max}}\{q(sym^2 f),q(sym^2 g)\}})\big),(1+u+v)^{1+\epsilon}\right\}, ~~\forall~ \epsilon>0.$$
\end{rmk}
Throughout the paper, we will work with Hecke-Maass forms of weight zero for the group $SL_2(\mathbb Z)$.
Also $p$ is always assumed to be a prime number.
\section{Preliminaries}
In this section we collect the relevant facts that will be needed in various stages of our proofs.

Let $f:\mathbb H\rightarrow \mathbb C$ be a normalized (i.e., $\lambda_f(1)=1$) Hecke-Maass cusp form (of weight zero) for $SL_2(\mathbb Z)$ with Laplacian eigenvalue $\frac{1}{4}+u^2$, and having Fourier expansion
\begin{equation*}
f(x+iy)=\sqrt{y}\sum_{n\neq 0}\lambda_f(n)K_{iu}(2\pi|n|y)e(nx).
\end{equation*}
Here $e(x)=e^{2\pi ix}$ and $K_{iu}$ is the MacDonald-Bessel function. Notice that the spectral parameter $u$ in this case is known to be a non-zero real number 
(i.e., $\frac{1}{4}+u^2> \frac{1}{4}$). Therefore, we can assume that $u>0$.
Like holomorphic Hecke eigenforms, Fourier coefficients of $f$ are also multiplicative in the following sense.
\begin{equation}\label{hr}
\lambda_f(n)\lambda_f(m)=\sum_{d|(m,n)}\lambda_f\left(\frac{mn}{d^2}\right).
\end{equation}
In particular, $\lambda_f(p)^2=\lambda_f(p^2)+1$ for a prime $p$. The Kim-Sarnak \cite{ks} bound states that 
\begin{equation}\label{ks}
|\lambda_f(n)|\leq d(n)n^{\frac{7}{64}},
\end{equation}
 where
$d(n)$ is the divisor function. As in the holomorphic set-up, one expects $|\lambda_f(n)|
\leq d(n)$, which is known as Ramanujan's estimate.

Let $\iota:\mathbb H\rightarrow \mathbb H$ be the antiholomorphic involution
$$\iota(x+iy)=-x+iy.$$ Then $f$ and $f\circ\iota$ have the same Laplacian eigenvalue. Since $\iota^2=1$ therefore its eigenvalues are $\pm 1$.
If $f\circ \iota=f$, we call $f$ even. In this case, $\lambda_f(n)=\lambda_f(-n)$. If $f\circ \iota=-f$, we call $f$ odd and $\lambda_f(n)=-\lambda_f(-n)$. Moreover, we say $f$ has parity $0$ or $1$ according to whether $f$ is even or odd, respectively.

Let $f$ and $g$ be two normalized Hecke-Maass eigenforms for the group $SL_2(\mathbb{Z})$ with Laplacian eigenvalues $\frac{1}{4}+u^2$ and $\frac{1}{4}+v^2$ respectively. Let $\lambda_f(n)$ and $\lambda_g(n)$ denote the $n$-th Fourier coefficient of $f$ and $g$, respectively.
Furthermore, assume $\delta, \eta \in \{0,1\}$ be the parity of $f$ and $g$, respectively.

The Rankin-Selberg $L$-function associated to $f$ and $g$ is defined by an absolutely convergent Euler product
\begin{equation}\label{rsp}
L(f\otimes g,s)=\prod_{p}\prod_{i,j=1,2}\big(1-\alpha_{f,i}(p)\alpha_{g,j}(p)p^{-s}\big)^{-1} ~~~~~{\rm for }~~{\rm{Re}}(s)\gg 1,
\end{equation}
where $\alpha_{f,i}(p)$ and $\alpha_{g,i}(p)$, $i=1,2$ are local parameters 
of $f$ and $g$, respectively. In this half-plane it is also given by an absolutely convergent Dirichlet series
\begin{equation}\label{rss}
L(f\otimes g,s)=\zeta(2s)\sum_{n=1}^{\infty}\frac{\lambda_f(n)\lambda_g(n)}{n^s}.
\end{equation}
For simplicity, we write 
\begin{equation}\label{rf}
R(f,g,s):=\sum_{n=1}^{\infty}\frac{\lambda_f(n)\lambda_g(n)}{n^s}.
\end{equation}

\begin{thm}\cite[Theorem 7.3]{liu}\label{rst}
Let $f$ and $g$ be as above. Then
\begin{enumerate}
\item
the product \eqref{rsp} and the series \eqref{rss} are absolutely convergent for ${\rm{Re}}(s)>1$,
\item
the function $L(f\otimes g,s)$ has analytic continuation and a functional equation. When $f=g$, 
then $L(f\otimes f,s)$ is holomorphic in the entire complex plane except for a simple pole at $s=1$.
\end{enumerate}
\end{thm}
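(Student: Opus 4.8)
The plan is to prove \thmref{rst} via the standard Rankin--Selberg unfolding method, realizing $L(f\otimes g,s)$ as an integral of $f\overline{g}$ against a real-analytic Eisenstein series and then transporting the analytic properties of the Eisenstein series to the $L$-function. First I would record the absolute convergence statement in part (1): from the Kim--Sarnak bound \eqref{ks} we have $\lambda_f(n),\lambda_g(n)\ll_\epsilon n^{7/64+\epsilon}$, so the Dirichlet series $R(f,g,s)$ in \eqref{rf} converges absolutely for $\mathrm{Re}(s)>1+7/32$; to sharpen this to the claimed half-plane $\mathrm{Re}(s)>1$ one passes to the Euler product \eqref{rsp} and uses that the Rankin--Selberg local parameters $\alpha_{f,i}(p)\alpha_{g,j}(p)$ satisfy the mean bound coming from the convergence of $\sum_n \lambda_f(n)^2\lambda_g(n)^2 n^{-s}$ for $\mathrm{Re}(s)>1$, which follows from Rankin's classical argument that $\sum_{n\le X}\lambda_f(n)^2\ll X$. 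Equivalently, the absolute convergence on $\mathrm{Re}(s)>1$ is read off from the analytic continuation once the completed $L$-function is shown to be of finite order with the rightmost pole at $s=1$.

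The heart of the proof is part (2). The key step is the unfolding identity: for $\mathrm{Re}(s)>1$,
\begin{equation*}
\int_{SL_2(\mathbb Z)\backslash \mathbb H} f(z)\,\overline{g(z)}\, E(z,s)\,\frac{dx\,dy}{y^2}
= (\text{explicit gamma factors})\cdot R(f,g,s),
\end{equation*}
where $E(z,s)=\sum_{\gamma\in \Gamma_\infty\backslash SL_2(\mathbb Z)} \mathrm{Im}(\gamma z)^s$ is the real-analytic Eisenstein series. Unfolding the Eisenstein series against the fundamental domain collapses the integral to $\int_0^\infty\int_0^1 f\overline{g}\, y^{s}\,\frac{dx\,dy}{y^2}$; the $x$-integration picks out the diagonal frequencies via orthogonality of $e(nx)$, producing $\sum_n \lambda_f(n)\overline{\lambda_g(n)}$ times an archimedean integral $\int_0^\infty K_{iu}(2\pi n y)K_{iv}(2\pi n y)\,y^{s}\,\frac{dy}{y}$. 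Since the coefficients are real, $\overline{\lambda_g(n)}=\lambda_g(n)$, and the Bessel integral evaluates to an explicit ratio of gamma factors (the Gradshteyn--Ryzhik formula for $\int_0^\infty K_{iu}(t)K_{iv}(t)t^{s-1}dt$) times $n^{-s}$, which assembles exactly into $R(f,g,s)$. The factor $\zeta(2s)$ in \eqref{rss} enters because the Eisenstein series $E(z,s)$ carries a $\zeta(2s)^{-1}$ in its normalization, so $L(f\otimes g,s)$ differs from $R(f,g,s)$ by precisely this factor.

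With the unfolding in place, the analytic continuation and functional equation of $L(f\otimes g,s)$ are inherited directly from those of $E(z,s)$. The nonholomorphic Eisenstein series has meromorphic continuation to all of $\mathbb C$ with its only pole a simple pole at $s=1$ whose residue is the constant $3/\pi$ (the reciprocal volume of the fundamental domain), together with the functional equation relating $E(z,s)$ to $E(z,1-s)$ via $\xi(2s)E(z,s)$; substituting this into the unfolding identity yields the functional equation for the completed Rankin--Selberg $L$-function and its meromorphic continuation. For the pole statement when $f=g$: the residue of $E(z,s)$ at $s=1$ is a nonzero constant, so the residue of the integral is proportional to $\int f\overline{f}\,\frac{dx\,dy}{y^2}=\|f\|^2>0$, forcing a genuine simple pole of $L(f\otimes f,s)$ at $s=1$; when $f\ne g$ the orthogonality $\langle f,g\rangle=0$ for distinct Hecke eigenforms makes the residue vanish, so $L(f\otimes g,s)$ is holomorphic there. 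The main obstacle, and the place where care is needed, is justifying the interchange of summation and integration and the convergence of the Bessel integral in the unfolding step near the boundary $\mathrm{Re}(s)=1$, together with the meromorphic continuation of the archimedean gamma factors; however, since all of this is the content of \cite[Theorem 7.3]{liu}, I would cite that reference for the delicate analytic continuation and functional equation and present the unfolding computation only as the conceptual skeleton.
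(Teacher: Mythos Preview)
The paper does not actually prove \thmref{rst}; it is quoted verbatim from \cite[Theorem~7.3]{liu} and used as a black box, so there is no in-paper argument to compare against. Your outline via the Rankin--Selberg unfolding of $f\overline{g}$ against the real-analytic Eisenstein series is the standard route (and is what \cite{liu} carries out), so conceptually you are on the right track and your sketch is sound.

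One small correction worth noting in your part (1): the Kim--Sarnak bound alone only gives absolute convergence of $R(f,g,s)$ for $\mathrm{Re}(s)>1+7/32$, and your attempt to sharpen this to $\mathrm{Re}(s)>1$ by appealing to $\sum_{n\le X}\lambda_f(n)^2\ll X$ is circular, since that estimate is itself a consequence of the Rankin--Selberg method. The clean way to get absolute convergence on $\mathrm{Re}(s)>1$ is to first establish the meromorphic continuation and the pole at $s=1$ via unfolding, and then invoke Landau's theorem on Dirichlet series with nonnegative coefficients (applied to $L(f\otimes f,s)$ and $L(g\otimes g,s)$, then Cauchy--Schwarz for the mixed case). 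You gesture at this at the end of your first paragraph, but the logic should be ordered so that part (2) feeds back into part (1), not the other way around.
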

 
The functional equation is actually of the form 
\begin{equation}\label{fe}
\Lambda(f\otimes g,s)=\Lambda(f\otimes g,1-s),
\end{equation}
where $\Lambda(f\otimes g,s)=\gamma(f\otimes g,s)L(f\otimes g,s)$
with
\begin{align*}
\gamma(f\otimes g,s)\!\!
&=\pi^{-2s}\Gamma\bigg(\frac{s+i(u+v)+r}{2}\bigg)\Gamma\bigg(\frac{s+i(u-v)+r}{2}\bigg)\Gamma\bigg(\frac{s-i(u+v)+r}{2}\bigg)\\
& \times \Gamma\bigg(\frac{s-i(u-v)+r}{2}\bigg),
\end{align*} 
where $r=0 ~{\rm or}~1$ according to whether $\delta=\eta$ or not (see, \cite[p. 133]{ik}).

Now, we define the symmetric square $L$-function of $f$ as follows:
\begin{equation}
L(sym^2f,s):=\prod_{p}\prod_{i=0}^{2}\Bigg(1-\alpha_{f,1}^{2-i}(p)\alpha_{f,2}^{i}(p)p^{-s}\Bigg)^{-1}=L(f\otimes f,s)\zeta(s)^{-1}.
\end{equation}
The existence of this $L$-function was established by Gelbart and Jacquet in \cite{gj}. The function $L(sym^2f,s)$ is known
to be entire and $L(sym^2f,1)\neq 0$ (see, \cite[p. 162]{hl}).

The Rankin-Selberg $L$-function $L(sym^2f\otimes sym^2g,s)$ attached to sym$^2f$ and sym$^2g$ is defined as
\begin{equation}\label{rss1}
L(sym^2f\otimes sym^2g,s)=\prod_{p}\prod_{i=0}^{2}\prod_{j=0}^{2}\Bigg(1-\alpha_{f,1}^{2-i}(p)\alpha_{f,2}^{i}(p)\alpha_{g,1}^{2-j}(p)\alpha_{g,2}^{j}(p)p^{-s}\Bigg)^{-1}.
\end{equation}
This function is also known to be entire if $f\neq g$ unless it has a simple pole at $s=1$ and $L(sym^2f\otimes sym^2g,1)\neq 0$ (see, \cite[p. 172]{hl}).
\vspace{.2in}

We now recall a fundamental results from complex analysis which we need further for our proof.

\begin{thm}[Rademacher \cite{rad}]\label{rthm}
Let $h(s)$ be a continuous function on the closed strip $a\leq \sigma \leq b$, holomorphic  and of finite order on $a<\sigma< b$. Further suppose that 
\begin{equation*}
|h(a+it)|\leq E|P+a+it|^\alpha, ~~~~~~~~|h(b+it)|\leq F|P+b+it|^\beta
\end{equation*}
where $E,F$ are positive constants and $P, \alpha,\beta$ are real constants that satisfy
\begin{equation*}
P+a >0, ~~~~~~~\alpha\geq \beta.
\end{equation*}
Then for all $a<\sigma<b$ and for all $t \in \mathbb{R}$, we have
\begin{equation*}
|h(\sigma+it)|\leq \left(E|P+\sigma+it|^\alpha\right)^{\frac{b-\sigma}{b-a}} \left(F|P+\sigma+it|^{\beta}\right)^{\frac{\sigma-a}{b-a}}.
\end{equation*}
\end{thm}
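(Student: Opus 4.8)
The plan is to prove the inequality by a maximum–principle (Phragmén–Lindelöf) comparison of $\log|h|$ against the logarithm of the proposed majorant. Write $s=\sigma+it$ and set
$$v(\sigma,t)=\frac{b-\sigma}{b-a}\log E+\frac{\sigma-a}{b-a}\log F+\ell(\sigma)\log|P+s|,\qquad \ell(\sigma)=\alpha\frac{b-\sigma}{b-a}+\beta\frac{\sigma-a}{b-a},$$
so that $e^{v}$ is exactly the right-hand side of the theorem. The hypothesis $P+a>0$ guarantees $P+\sigma>0$ on the whole strip, so $P+s$ stays in the right half-plane, $\log|P+s|$ is smooth, and the principal branch of $\log(P+s)$ is holomorphic there with $\arg(P+s)\in(-\pi/2,\pi/2)$. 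The three-lines method suggests dividing $h$ by the zero-free holomorphic normalizer $D(s)=\big(E(P+s)^{\alpha}\big)^{(b-s)/(b-a)}\big(F(P+s)^{\beta}\big)^{(s-a)/(b-a)}$; however $|D(s)|$ differs from $e^{v}$ by the factor $\exp\!\big(-\tfrac{(\beta-\alpha)t}{b-a}\arg(P+s)\big)$, which under $\alpha\geq\beta$ is $\geq1$, so dividing by $D$ only yields the weaker bound $|h|\leq|D|$. To reach the sharp constant I would instead compare $\log|h|$ with $v$ directly.

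First I would record that $u:=\log|h|$ is subharmonic on the open strip (harmonic off the zeros of $h$, equal to $-\infty$ at the zeros), and compute the Laplacian of the majorant. Since the affine-in-$\sigma$ part of $v$ is harmonic and $\log|P+s|$ is harmonic, only the cross term survives:
$$\Delta v=2\,\ell'(\sigma)\,\partial_\sigma\log|P+s|=\frac{2(\beta-\alpha)}{b-a}\cdot\frac{P+\sigma}{|P+s|^{2}}.$$
Here is where the hypothesis $\alpha\geq\beta$ enters decisively: it forces $\Delta v\leq 0$, i.e.\ $v$ is superharmonic, so that $w:=u-v$ is subharmonic. On the two edges the hypotheses give exactly $u\leq v$: on $\sigma=a$ one has $v=\log\big(E|P+s|^{\alpha}\big)$ together with $|h(a+it)|\leq E|P+a+it|^{\alpha}$, and symmetrically on $\sigma=b$. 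Hence $w\leq 0$ on the boundary of the strip.

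It remains to propagate $w\leq 0$ from the boundary into the interior, which is a Phragmén–Lindelöf argument on the unbounded strip; this is the step that genuinely uses the finite-order hypothesis. Because $h$ is of finite order and $v=O(\log(2+|t|))$, the subharmonic function $w$ grows slowly enough in $|t|$ to satisfy the growth condition of the strip version of the maximum principle (whose admissible growth depends on the width $b-a$). Applying that principle to $w$ yields $w\leq 0$ throughout $a<\sigma<b$, i.e.\ $|h(s)|\leq e^{v}=\big(E|P+s|^{\alpha}\big)^{(b-\sigma)/(b-a)}\big(F|P+s|^{\beta}\big)^{(\sigma-a)/(b-a)}$, as claimed. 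I expect the main obstacle to be the clean execution of this last step: matching the precise meaning of ``finite order'' to the growth tolerated by the strip form of Phragmén–Lindelöf (where the narrowness of the strip is essential), and handling the zeros of $h$ and the boundary regularity so that the maximum principle for subharmonic functions applies without additional hypotheses.
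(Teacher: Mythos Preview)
The paper does not prove this theorem; it is stated as a classical result and attributed to Rademacher \cite{rad}, so there is no proof in the paper to compare your proposal against. Your argument is nonetheless a correct outline. The computation $\Delta v=\dfrac{2(\beta-\alpha)}{b-a}\cdot\dfrac{P+\sigma}{|P+s|^{2}}\leq 0$ is right, and the reduction to a Phragm\'en--Lindel\"of maximum principle for the subharmonic function $w=\log|h|-v$ on the strip is sound; polynomial growth of $w$ (coming from finite order of $h$ and $v=O(\log(2+|t|))$) is far below the exponential threshold required on a strip of width $b-a$, so the last step goes through.

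For context, Rademacher's own proof uses a holomorphic (rather than superharmonic) comparison: he divides by the analytic normalizer $E^{(b-s)/(b-a)}F^{(s-a)/(b-a)}(P+s)^{\ell(s)}$ with $\ell(s)=\alpha\dfrac{b-s}{b-a}+\beta\dfrac{s-a}{b-a}$ and applies the holomorphic three-lines theorem. As you correctly observed, the modulus of this normalizer differs from $e^{v}$ by the factor $\exp\!\big(\tfrac{(\alpha-\beta)t}{\,b-a\,}\arg(P+s)\big)$; the condition $\alpha\geq\beta$ and the fact that $t$ and $\arg(P+s)$ have the same sign make this factor $\geq 1$ on the \emph{boundary} lines, so the quotient is still bounded by $1$ there, and the final inequality then follows after unwinding the modulus on the interior. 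Your subharmonic route sidesteps this bookkeeping at the cost of invoking the potential-theoretic (rather than purely holomorphic) form of Phragm\'en--Lindel\"of; both approaches are standard and equivalent in strength.
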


\section{Auxiliary Results }
\begin{lem}\label{lpn}
Let $f$ be a normalized Hecke-Maass cusp form of weight zero for $SL_2(\mathbb Z)$ with Fourier coefficients $\lambda_f(n)$. Then 
\begin{equation}\label{af}
\sum_{p\leq x}\lambda_f^2(p)\log{p}=x +O\left(x\sqrt{q(sym^2 f)} e^{-\frac{c}{162}\sqrt{\log x}}\right),
\end{equation}
where $c$ is an absolute constant.
\end{lem}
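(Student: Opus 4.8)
The plan is to reduce the identity to a prime number theorem for the Rankin--Selberg $L$-function $L(f\otimes f,s)$ and then carry out the classical de la Vallée Poussin contour argument, keeping the dependence on the conductor explicit. From the Euler product \eqref{rsp}, for $\mathrm{Re}(s)$ large,
\[
-\frac{L'}{L}(f\otimes f,s)=\sum_{n\ge1}\Lambda_{f\otimes f}(n)\,n^{-s},\qquad \Lambda_{f\otimes f}(p)=\Big(\sum_{i,j=1,2}\alpha_{f,i}(p)\alpha_{f,j}(p)\Big)\log p=\lambda_f^2(p)\log p ,
\]
using $\lambda_f(p)=\alpha_{f,1}(p)+\alpha_{f,2}(p)$, while $|\Lambda_{f\otimes f}(p^k)|\ll p^{(7/32)k}\log p$ for $k\ge2$ by the Kim--Sarnak bound \cite{ks} applied to the local parameters. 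Setting $\psi_{f\otimes f}(x):=\sum_{n\le x}\Lambda_{f\otimes f}(n)$, summing over the prime powers $p^k$ with $k\ge2$ (the range $k=2$ dominating) gives
\[
\sum_{p\le x}\lambda_f^2(p)\log p=\psi_{f\otimes f}(x)+O\!\big(x^{23/32}\log x\big),
\]
and $x^{23/32}\log x$ is far below the error claimed in \eqref{af}. Hence it suffices to prove $\psi_{f\otimes f}(x)=x+O\big(x\sqrt{q(sym^2 f)}\,e^{-\frac{c}{162}\sqrt{\log x}}\big)$.

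For this I would apply a truncated Perron formula to $-\frac{L'}{L}(f\otimes f,s)$ and move the line of integration to the left of $\mathrm{Re}(s)=1$. The main term $x$ is the contribution of the pole at $s=1$: by \thmref{rst}(2), $L(f\otimes f,s)$ has there exactly a simple pole (equivalently $L(sym^2 f,s)$ is entire and $L(sym^2 f,1)\ne0$), so $-\frac{L'}{L}(f\otimes f,s)$ has residue $1$ and $\tfrac{x^s}{s}$ contributes $x$. Bounding the shifted integral requires the functional equation \eqref{fe} (here $u=v$, so $r=0$), polynomial growth of $L(f\otimes f,s)$ in vertical strips, i.e.\ a convexity estimate made effective via \cite{ks}, a bound for the number of zeros in a bounded box, and above all a classical zero-free region of the shape $\sigma>1-c_1/\log\!\big(q(sym^2 f)(|t|+3)\big)$. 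Balancing the contour abscissa and the truncation height against this region yields the error term; the factor $\sqrt{q(sym^2 f)}$ is the analytic conductor appearing in the completed $L$-function, and the explicit constant $c/162$ is whatever survives the optimization, its size governed by the degree of the symmetric-square Rankin--Selberg convolution used to produce the zero-free region and by the Kim--Sarnak exponent entering the effective bounds. The classical prime number theorem $\sum_{p\le x}\log p=x+O\big(xe^{-c\sqrt{\log x}}\big)$ is a special case of this and is already absorbed; putting the pieces together proves \eqref{af}.

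The real work, I expect, is the effective zero-free region for $L(f\otimes f,s)$ --- equivalently for $L(sym^2 f,s)$ --- with \emph{completely explicit} dependence on $q(sym^2 f)=3(3+u)^2$, since it is exactly this explicitness that is exploited afterwards in \thmref{firstn}. It is obtained by the Hadamard--de la Vallée Poussin positivity method applied to an auxiliary product assembled from $\zeta$, from $L(sym^2 f,s)$ together with its shifts by $\pm it$, and from the degree-nine Rankin--Selberg convolution $L(sym^2 f\otimes sym^2 f,s)$, whose Dirichlet coefficients are non-negative and which has at most a simple pole at $s=1$ (recalled after \eqref{rss1}); the absence of an exceptional real zero is supplied by Hoffstein--Lockhart \cite{hl}. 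Making each step effective forces one to use \cite{ks} in place of the Ramanujan bound throughout. Once the zero-free region and the zero-count are available, the remaining steps --- the Euler-product bookkeeping, the truncation of prime powers, and the contour shift --- are entirely routine.
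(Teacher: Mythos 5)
Your proposal is correct and follows essentially the same route as the paper: both reduce the sum to the prime number theorem for the symmetric-square $L$-function with explicit conductor dependence (since $L(f\otimes f,s)=\zeta(s)L(sym^2f,s)$, your $\psi_{f\otimes f}(x)=x+O(\cdot)$ is exactly the paper's combination of the classical PNT for $\zeta$ with $\psi(sym^2f,x)=O\big(x\sqrt{q(sym^2f)}e^{-\frac{c}{162}\sqrt{\log x}}\big)$, the latter being Theorem 5.13 of Iwaniec--Kowalski, whose proof is the de la Vall\'ee Poussin argument you sketch, with Hoffstein--Lockhart excluding a Siegel zero). The only cosmetic difference is that you control the prime-power terms pointwise via Kim--Sarnak, where the paper invokes the Rankin--Selberg bound from \cite[p.~110, exercise~6]{ik}; both give an error well inside the claimed one.
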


\begin{proof}
This is essentially Theorem 5.13 of \cite{ik} for the $L$-function $L(sym^2 f,s)$ combined with the standard Hecke relation
\eqref{hr}. However, for the sake of completeness, here we give an outline of the proof.

Consider the function 
\begin{equation*}
\psi(sym^2 f,x):= \sum_{n\leq x}\Lambda _{sym^2 f}(n),
\end{equation*}
where $\Lambda _{sym^2 f}(n)$ is the $n$-th coefficient of the negative of the logarithmic derivative of the $L$-function $L(sym^2 f,s)$, which are supported only on prime powers.

Now since the Rankin-Selberg $L$-function $L(sym^2 f \otimes sym^2 f,s)$ exists and has a simple pole at $s=1$ (see, \cite[p. 180]{hl}), then 
from \cite[p. 110, exercise 6]{ik}, we have
\begin{equation*}
\psi(sym^2 f,x)=\sum_{p\leq x}\lambda_{sym^2 f}(p)\log p+O\big(\sqrt{x}d^2\log^2 (xq(sym^2 f))\big)
\end{equation*}
where the implied constant is absolute and $d$ is the degree of the $L$-function  $L(sym^2 f,s)$ (notice that $d=3$).
 
Using the fact that $\lambda_{sym^2 f}(p)=\lambda_f(p^2)=\lambda_{f}^2(p)-1$ (by \eqref{hr}), we get
\begin{equation}\label{pnt}
\sum_{p\leq x}\lambda^2_{f}(p)\log (p)=\psi(sym^2 f,x)+\sum_{p\leq x}\log p+O\big(\sqrt{x}d^2\log^2 (xq(sym^2 f))\big).
\end{equation}
If we apply the prime number theorem for the $L$-functions $\zeta(s)$ and $L(sym^2 f,s)$ (see, \cite[equ. 5.52]{ik}, we have
\begin{equation}\label{pnr}
\sum_{p\leq x}\log p=x+O(xe^{-\frac{c}{2}\sqrt{\log x}}),
\end{equation} 
and 
\begin{equation}\label{pns}
\psi(sym^2 f,x)=O\left(x\sqrt{q(sym^2 f)} e^{-\frac{c}{162}\sqrt{\log x}}\right),
\end{equation} 
where $c$ is an absolute constant appearing in the proof of Theorem $5.10$ of \cite{ik}.
After substituting these approximations in \eqref{pnt}, we get
\noindent
\begin{equation*}
\sum_{p\leq x}\lambda^2_{f}(p)\log (p)=x+O\big(\sqrt{x}d^2\log^2 (xq(sym^2 f))\big)+O(xe^{-\frac{c}{2}\sqrt{\log x}})+
O\left(x\sqrt{q(sym^2 f)} e^{-\frac{c}{162}\sqrt{\log x}}\right).
\end{equation*}
Notice that the first two error terms are dominated by the third error term and hence we get the required result.
\end{proof}

\vspace{.2in}
\begin{rmk}
\begin{enumerate}
\item
The approximation formula \eqref{af} has meaning when the error term is smaller than the main term, and this is the case for 
\begin{equation}
x\geq \exp \left(\tilde{c}\log ^2(\sqrt{q(sym^2 f)})\right),
\end{equation}
where $\tilde{c}=\left(\frac{81}{c}\right)^2$.
\item
To write equation \eqref{pns} we have used the facts that the $L$-function $L(sym^2 f,s)$ is entire 
and has no Siegel zero (see, \cite{hl}).
\end{enumerate}
\end{rmk}

\begin{prop}\label{lowerb}
Let $f$ and $g$ be two distinct Hecke-Maass cusp forms for $SL_2(\mathbb Z)$. Assume that $\lambda_f(n)\lambda_g(n)\geq 0$ for all $n\leq x$. Then for 
$x\geq \exp \big(\tilde{c}\log ^2(\sqrt{{\rm{max}}\{q(sym^2 f),q(sym^2 g)\}})\big)$, we have
\begin{equation*}
\sum_{n\leq x}\lambda_f(n)\lambda_g(n)\gg_{f,g} \frac{x^{25/32}}{\log^2x}.
\end{equation*}
\end{prop}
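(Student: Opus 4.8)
The plan is to use the sign hypothesis to discard all composite indices, keep only the prime terms $\lambda_f(p)\lambda_g(p)\geq 0$, and then replace each of these by $\lambda_f(p)^2\lambda_g(p)^2$ at the cost of a factor $x^{7/32}$ coming from the Kim--Sarnak bound \eqref{ks}. The reason to pass to squares is that $\lambda_f(p)^2\lambda_g(p)^2$ has a positive average: by the Hecke relation \eqref{hr}, $\lambda_f(p)^2\lambda_g(p)^2=\lambda_f(p^2)\lambda_g(p^2)+\lambda_f(p^2)+\lambda_g(p^2)+1$, and only the ``$1$'' produces a main term of size $x$, the other three pieces being killed by symmetric-square prime number theorems. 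The exponent $25/32=1-\tfrac{7}{32}$ is exactly what this mechanism produces.

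Concretely, first I would note that, since $\lambda_f(n)\lambda_g(n)\geq 0$ for every $n\leq x$,
\begin{equation*}
\sum_{n\leq x}\lambda_f(n)\lambda_g(n)\ \geq\ \sum_{p\leq x}\lambda_f(p)\lambda_g(p)\ =\ \sum_{p\leq x}|\lambda_f(p)|\,|\lambda_g(p)|\ \geq\ \frac{1}{\log x}\sum_{p\leq x}|\lambda_f(p)|\,|\lambda_g(p)|\log p,
\end{equation*}
using the hypothesis at $n=p$ and then inserting the weight $\log p\leq\log x$. Next I would invoke \eqref{ks} in the form $|\lambda_f(p)|\leq 2p^{7/64}$, $|\lambda_g(p)|\leq 2p^{7/64}$, so that $\lambda_f(p)^2\lambda_g(p)^2\leq 4p^{7/32}\,|\lambda_f(p)|\,|\lambda_g(p)|$; after rearranging and using $p\leq x$ this gives
\begin{equation*}
\sum_{n\leq x}\lambda_f(n)\lambda_g(n)\ \geq\ \frac{1}{4x^{7/32}\log x}\sum_{p\leq x}\lambda_f(p)^2\lambda_g(p)^2\log p .
\end{equation*}

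It then remains to prove $\sum_{p\leq x}\lambda_f(p)^2\lambda_g(p)^2\log p\gg x$ in the stated range of $x$. Using \eqref{hr} twice, I would split this into
\begin{equation*}
\sum_{p\leq x}\lambda_f(p^2)\lambda_g(p^2)\log p\ +\ \sum_{p\leq x}\lambda_f(p^2)\log p\ +\ \sum_{p\leq x}\lambda_g(p^2)\log p\ +\ \sum_{p\leq x}\log p .
\end{equation*}
The last sum is $x+O(xe^{-c\sqrt{\log x}})$ by the prime number theorem; combining \lemref{lpn} with this shows $\sum_{p\leq x}\lambda_f(p^2)\log p=O\big(x\sqrt{q(sym^2 f)}\,e^{-\frac{c}{162}\sqrt{\log x}}\big)$ and similarly for $g$; and since $\lambda_f(p^2)\lambda_g(p^2)=\lambda_{sym^2 f}(p)\lambda_{sym^2 g}(p)$ is the prime coefficient of the Rankin--Selberg $L$-function $L(sym^2 f\otimes sym^2 g,s)$ of \eqref{rss1}, I would estimate $\sum_{p\leq x}\lambda_f(p^2)\lambda_g(p^2)\log p$ by the prime number theorem for that $L$-function (arguing as in the proof of \lemref{lpn}, via \cite{ik}): for distinct $f,g$ of level one it is entire, non-vanishing at $s=1$, and admits a standard zero-free region with no exceptional zero, which yields a bound $O\big(xQ\,e^{-c'\sqrt{\log x}}\big)$ with $Q$ a fixed power of $q(sym^2 f)\,q(sym^2 g)$. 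All of these error terms together are at most $x/2$ as soon as $x\geq\exp\big(\tilde c\log^2\sqrt{\max\{q(sym^2 f),q(sym^2 g)\}}\big)$ --- the same shape of threshold as in the remark after \lemref{lpn}, enlarging $\tilde c$ if necessary to absorb the (polynomially larger) conductor of the degree-nine $L$-function. Hence $\sum_{p\leq x}\lambda_f(p)^2\lambda_g(p)^2\log p\geq x/2$ there, and plugging back in,
\begin{equation*}
\sum_{n\leq x}\lambda_f(n)\lambda_g(n)\ \geq\ \frac{x^{25/32}}{8\log x}\ \gg\ \frac{x^{25/32}}{\log^2 x} .
\end{equation*}

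The hard part will be the cross term $\sum_{p\leq x}\lambda_{sym^2 f}(p)\lambda_{sym^2 g}(p)\log p$: it needs an effective prime number theorem for the degree-nine $L$-function $L(sym^2 f\otimes sym^2 g,s)$ with explicit dependence on its analytic conductor and, crucially, the absence of a Siegel zero --- both to get the $o(x)$ saving and to locate the threshold on $x$, which must be reconciled with the threshold coming from the degree-three prime number theorem for $L(sym^2 f,s)$ in \lemref{lpn}. (Distinctness of $f$ and $g$ enters only to rule out a pole of $L(sym^2 f\otimes sym^2 g,s)$ at $s=1$; a simple pole with positive residue would merely improve the estimate.) Everything else is bookkeeping with the Hecke relation \eqref{hr}, the Kim--Sarnak bound \eqref{ks}, and partial summation.
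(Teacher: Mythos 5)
Your route is genuinely different from the paper's, and the difference matters. You keep only the primes $p\leq x$ and convert $\sum_{p\leq x}|\lambda_f(p)\lambda_g(p)|$ into $x^{-7/32}\sum_{p\leq x}\lambda_f(p)^2\lambda_g(p)^2$ via Kim--Sarnak, which forces you to prove $\sum_{p\leq x}\lambda_f(p)^2\lambda_g(p)^2\log p\gg x$ \emph{unconditionally}. Expanding by the Hecke relation, the term $\sum_{p\leq x}\lambda_{sym^2f}(p)\lambda_{sym^2g}(p)\log p$ cannot be discarded by the sign hypothesis (that would only give $\lambda_f(p^2)\lambda_g(p^2)\geq 0$ for $p\leq\sqrt{x}$, and restricting to $p\leq\sqrt{x}$ destroys the exponent $25/32$), so you are compelled to invoke an effective prime number theorem for the degree-nine $L$-function $L(sym^2f\otimes sym^2g,s)$ with explicit conductor dependence and no exceptional zero. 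You correctly flag this as the hard part, but you do not establish it: it requires cuspidality of $sym^2f$ and $sym^2g$ on $GL(3)$, their distinctness, the analytic theory of the $GL(3)\times GL(3)$ Rankin--Selberg convolution, and a Goldfeld--Hoffstein--Lieman-type argument excluding a Siegel zero for this convolution. None of this is off-the-shelf in the quantitative form you need, so as written the proposal has a genuine gap at precisely the step carrying all the weight.

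The paper avoids this entirely by a different decomposition: it restricts to $n=pq$ with $p\neq q$ primes up to $\sqrt{x}$, so that $\sum_{n\leq x}\lambda_f(n)\lambda_g(n)\geq\bigl(\sum_{p\leq\sqrt{x}}\lambda_f(p)\lambda_g(p)\bigr)^2-\sum_{p\leq\sqrt{x}}\lambda_f^2(p)\lambda_g^2(p)$, and then applies Cauchy--Schwarz/Kim--Sarnak to the squared sum. The point is that it now only needs a \emph{lower} bound for $\sum_{p\leq\sqrt{x}}\lambda_f^2(p)\lambda_g^2(p)$, and since $p^2\leq x$ in this range the sign hypothesis can be used a second time to drop the cross term $\lambda_f(p^2)\lambda_g(p^2)\geq 0$ from the Hecke identity, leaving $\lambda_f^2(p)+\lambda_g^2(p)-1$, which is handled by the two degree-three prime number theorems of \lemref{lpn} alone (this is also where the threshold on $x$ comes from). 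So the squaring over $pq$ is not a cosmetic choice: it is what lets the argument exploit nonnegativity at $n=p^2$ and dispense with the degree-nine $L$-function. If you want to salvage your version, either supply the missing Siegel-zero-free effective PNT for $L(sym^2f\otimes sym^2g,s)$, or switch to the paper's decomposition.
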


\begin{proof}
From the given hypothesis, we have 
\begin{align*}
\sum_{n\leq x}\lambda_f(n)\lambda_g(n)&\geq \sum_{p,q\leq \sqrt{x}, p\neq q}\lambda_f(pq)\lambda_g(pq)\\
&
=\left(\sum_{p\leq \sqrt{x}}\lambda_f(p)\lambda_g(p)\right)^2 -\sum_{p\leq \sqrt{x}}\lambda_f^2(p)\lambda_g^2(p).
\end{align*}
Further, using the Kim-Sarnak's bound, i.e., $|\lambda_f(p)|,|\lambda_g(p)|\leq2p^{\frac{7}{64}}$ and the fact that $p\leq \sqrt{x}$, we get
\begin{align*}
\left(\sum_{p\leq \sqrt{x}}\lambda_f(p)\lambda_g(p)\right)^2&\geq \frac{1}{16x^{7/32}}
\left(\sum_{p\leq \sqrt{x}}\lambda_f^2(p)\lambda_g^2(p)\right)^2.
\end{align*}

\noindent
Therefore, we have
\begin{align}\label{mb}
\sum_{n\leq x}\lambda_f(n)\lambda_g(n)
\geq \frac{1}{16x^{7/32}}
\left(\sum_{p\leq \sqrt{x}}\lambda_f^2(p)\lambda_g^2(p)\right)^2 -\sum_{p\leq \sqrt{x}}\lambda_f^2(p)\lambda_g^2(p).
\end{align}
Hence to complete the proof, we need an appropriate upper and lower bound for the function $\sum_{p\leq \sqrt{x}}\lambda_f^2(p)\lambda_g^2(p)$. For our purpose, the following trivial upper bound is sufficient which we get using \eqref{ks}.
\begin{equation}\label{mlb}
\sum_{p\leq \sqrt{x}}\lambda_f^2(p)\lambda_g^2(p)\ll x^{\frac{23}{32}}.
\end{equation}

Next we are going to get a lower for the sum $\sum_{p\leq \sqrt{x}}\lambda_f^2(p)\lambda_g^2(p)$.
The Hecke relation \eqref{hr}, gives
\begin{equation*}
\lambda_f(p^2)\lambda_g(p^2)=\lambda_f^2(p)\lambda_g^2(p)-\lambda_f^2(p)-\lambda_g^2(p)+1.
\end{equation*} 
Let $p\leq \sqrt{x}$ be a prime. Then by our hypothesis $\lambda_f(p^2)\lambda_g(p^2)\geq 0$,
i.e., 
\begin{equation*}
\lambda_f^2(p)\lambda_g^2(p)\geq\lambda_f^2(p)+\lambda_g^2(p)-1.
\end{equation*}
Therefore, we have
\begin{align*}
\sum_{p\leq \sqrt{x}}\lambda_f^2(p)\lambda_g^2(p)& \geq\sum_{p\leq \sqrt{x}}\lambda_f^2(p)+\sum_{p\leq \sqrt{x}}\lambda_g^2(p)-\sum_{p\leq \sqrt{x}}1
\end{align*}
Now apply \lemref{lpn} for $f$ and $g$ and the prime number theorem, one can get
\begin{align*}
\sum_{p\leq \sqrt{x}}\lambda_f^2(p)+\sum_{p\leq \sqrt{x}}\lambda_g^2(p)-\sum_{p\leq \sqrt{x}}1 & \geq
\big(1+o(1)\big)\frac{\sqrt{x}}{\log x}\\
& \gg \frac{\sqrt{x}}{\log x},
\end{align*}
provided $x\geq exp \big(\tilde{c}\log ^2(\sqrt{{\rm{max}}\{q(sym^2 f),q(sym^2 g)\}})\big)$.
Therefore, we have
\begin{align}\label{mub}
\sum_{p\leq \sqrt{x}}\lambda_f^2(p)\lambda_g^2(p)\gg \frac{\sqrt{x}}{\log x}.
\end{align}

After substituting estimates \eqref{mlb} and \eqref{mub} in \eqref{mb}, we get the required result and hence completes the proof.
\end{proof}
\vspace{.2in}
\begin{rmk}
In the proof of \propref{lowerb}, if one assume the Ramanujan's bound for the coefficients i.e., $|\lambda_f(p)|,|\lambda_g(p)|\leq2$, then one would get the following:
\begin{equation*}
\sum_{n\leq x}\lambda_f(n)\lambda_g(n)\gg \frac{x}{\log^2x}.
\end{equation*}
\end{rmk}

\bigskip
Now, let $L_{f,g}(s):=\sum_{n=1}^{\infty}\dfrac{\lambda_f^2(n)\lambda_g^2(n)}{n^s}$ for $\rm{Re} (s)\gg 1$. Then we have the following result;
\begin{lem}\label{dec}
For $\rm{Re} (s)>1$, we have
\begin{equation}
L_{f,g}(s)=\zeta(s)L(sym^2f,s)L(sym^g,s)L(sym^2f\otimes sym^2g,s)U(s),
\end{equation}
where $U(s)$ is a Dirichlet series converges uniformly and absolutely in the half plane $\rm{Re} (s)>\frac{15}{16}$.
\end{lem}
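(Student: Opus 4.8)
The plan is to compare Euler factors prime by prime and then estimate the resulting correction product. By the Kim--Sarnak bound $|\alpha_{f,i}(p)|,|\alpha_{g,j}(p)|\leq p^{7/64}$ (of which \eqref{ks} is a consequence), all four $L$-functions on the right-hand side are given by absolutely convergent Euler products for $\mathrm{Re}(s)$ large, and so is $L_{f,g}(s)=\sum_n\lambda_f^2(n)\lambda_g^2(n)n^{-s}$. In that half-plane I would set
\[
U(s):=L_{f,g}(s)\Bigl(\zeta(s)L(sym^2f,s)L(sym^2g,s)L(sym^2f\otimes sym^2g,s)\Bigr)^{-1}=\prod_p U_p(s),
\]
where, writing $T=p^{-s}$,
\begin{align*}
U_p(s)={}&\Bigl(\sum_{k\geq 0}\lambda_f^2(p^k)\lambda_g^2(p^k)\,T^k\Bigr)(1-T)\\
&\times\prod_{i=0}^{2}\bigl(1-\alpha_{f,1}^{2-i}(p)\alpha_{f,2}^{i}(p)T\bigr)\prod_{j=0}^{2}\bigl(1-\alpha_{g,1}^{2-j}(p)\alpha_{g,2}^{j}(p)T\bigr)\\
&\times\prod_{i=0}^{2}\prod_{j=0}^{2}\bigl(1-\alpha_{f,1}^{2-i}(p)\alpha_{f,2}^{i}(p)\alpha_{g,1}^{2-j}(p)\alpha_{g,2}^{j}(p)T\bigr).
\end{align*}
The lemma then reduces to showing that $\prod_p U_p(s)$ converges absolutely and locally uniformly on $\mathrm{Re}(s)>15/16$; expanding the product realizes $U(s)$ as an absolutely convergent Dirichlet series there.

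The crux is that the coefficient of $T$ in $U_p(s)$ vanishes for every $p$. The first factor contributes $1+\lambda_f^2(p)\lambda_g^2(p)\,T+O(T^2)$, and by \eqref{hr} one has $\lambda_f^2(p)\lambda_g^2(p)=(1+\lambda_f(p^2))(1+\lambda_g(p^2))$. On the other hand, multiplying out the four remaining (inverted) local factors and collecting the linear term gives $1-\bigl(1+\lambda_{sym^2f}(p)+\lambda_{sym^2g}(p)+\lambda_{sym^2f}(p)\lambda_{sym^2g}(p)\bigr)T+O(T^2)$, which equals $1-(1+\lambda_f(p^2))(1+\lambda_g(p^2))\,T+O(T^2)$ since $\lambda_{sym^2f}(p)=\lambda_f(p^2)$ (cf. the proof of \lemref{lpn}). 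Hence in the product $U_p(s)$ the two linear terms cancel and $U_p(s)=1+O(T^2)$.

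To bound the tail, I would use \eqref{hr} once more in the form $\lambda_f^2(p^k)=\sum_{0\leq j\leq k}\lambda_f(p^{2j})$: this makes $\sum_k\lambda_f^2(p^k)\lambda_g^2(p^k)T^k$ a rational function of $T$, and by \eqref{ks} its $T^k$-coefficient $\lambda_f^2(p^k)\lambda_g^2(p^k)$ is $\ll(k+1)^4p^{7k/16}$. The remaining product $(1-T)\prod_i(\cdots)\prod_j(\cdots)\prod_{i,j}(\cdots)$ is a polynomial in $T$ of degree $16$ whose $T^j$-coefficient is $\ll p^{7j/16}$ by Kim--Sarnak, the largest local parameters (those of $sym^2f\otimes sym^2g$) being of size $\ll p^{7/16}$. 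Convolving these two estimates, the coefficient $c_m(p)$ of $T^m$ in $U_p(s)$ satisfies $|c_m(p)|\ll_m p^{7m/16}$ with implied constant polynomial in $m$; together with $c_1(p)=0$ this gives, uniformly for $\sigma=\mathrm{Re}(s)$ in a compact subset of $(7/16,\infty)$,
\[
\sum_{m\geq 2}|c_m(p)|\,p^{-m\sigma}\ll\sum_{m\geq 2}(m+1)^4\,p^{(7/16-\sigma)m}\ll p^{7/8-2\sigma}.
\]
Since $\sum_p p^{7/8-2\sigma}<\infty$ precisely when $\sigma>15/16$, the Euler product $\prod_p U_p(s)$ converges absolutely and locally uniformly on $\mathrm{Re}(s)>15/16$, and the induced Dirichlet series does as well.

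I do not anticipate any genuine analytic obstacle: the vanishing of the linear term is forced by the single Hecke identity $\lambda_f^2(p)=\lambda_f(p^2)+1$, and the rest is bookkeeping with Euler factors. The one point requiring care is tracking the exponents sharply, so that the $T^2$-coefficient of $U_p(s)$ is genuinely $\ll p^{7/8}$ and the product $\prod_p U_p(s)$ is seen to converge on all of $\mathrm{Re}(s)>15/16$ rather than on a smaller half-plane.
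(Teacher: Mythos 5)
Your proof is correct and is essentially the argument the paper intends: the paper simply cites L\"u's Lemma 2.2 for the holomorphic case and notes that Kim--Sarnak ($7/64$ in place of Ramanujan) pushes the half-plane of absolute convergence of the $U$-factor from $\mathrm{Re}(s)>\tfrac12$ to $\mathrm{Re}(s)>\tfrac{15}{16}$. You have just written out the local Euler-factor comparison (cancellation of the linear term via $\lambda_f^2(p)=1+\lambda_f(p^2)$, then the coefficient bound $p^{7m/16}$ giving $\sum_p p^{7/8-2\sigma}<\infty$ exactly for $\sigma>15/16$) that the cited reference supplies.
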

\begin{proof}
The proof is exactly same as in the case of holomorphic cusp forms (see, \cite[Lemma 2.2]{lu}) except that the $U$ factor converges absolutely in the region ${\rm{Re}}(s)>\frac{15}{16}$ instead of ${\rm{Re}}(s)>\frac{1}{2}$. Since for a Hecke-Maass cusp form the Kim-Sarnak's bound is weaker than the Ramanujan's bound.
\end{proof}

\begin{prop}\label{cbs}
Let $f$ and $g$ be two distinct normalized Hecke-Maass cusp forms for $SL_2(\mathbb Z)$. Then for any $\epsilon>0$
and $\frac{15}{16}+\epsilon\leq\sigma
< 1+\epsilon$, $t\in \mathbb R$, we have
\begin{equation}
L_{f,g}(\sigma+it)\ll_{f,g,\epsilon}(1+|t|)^{8(1+\epsilon-\sigma)}.
\end{equation} 
\end{prop}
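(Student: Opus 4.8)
The plan is to feed the factorization of \lemref{dec} into the Phragm\'en--Lindel\"of convexity principle, taken in the form of Rademacher's \thmref{rthm}. Write
\[
\Phi(s)=\zeta(s)\,L(sym^2f,s)\,L(sym^2g,s)\,L(sym^2f\otimes sym^2g,s),
\]
so that $L_{f,g}(s)=\Phi(s)\,U(s)$ by \lemref{dec}, with $U(s)$ absolutely convergent --- hence $O_{f,g,\epsilon}(1)$ --- throughout $\mathrm{Re}(s)\geq\frac{15}{16}+\epsilon$. Because $f\neq g$, the symmetric squares are non-isomorphic, so $L(sym^2f\otimes sym^2g,s)$ is entire (as recalled in the Preliminaries); combined with the entirety of $L(sym^2f,s)$ and $L(sym^2g,s)$ and the single simple pole of $\zeta(s)$, the function $h(s):=(s-1)\Phi(s)$ is entire and of finite order. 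It therefore suffices to bound $h(s)$ in the strip $-\epsilon\leq\mathrm{Re}(s)\leq 1+\epsilon$ and then divide by $(s-1)$ --- for bounded $|t|$ the asserted estimate is trivial once a fixed neighbourhood of the pole $s=1$ is excluded, so we may assume $|t|$ large.

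Next I would estimate $h$ on the two edges of the strip. On $\mathrm{Re}(s)=1+\epsilon$ all four Dirichlet series converge absolutely, so $\Phi(1+\epsilon+it)\ll_{f,g,\epsilon}1$ and $|h(1+\epsilon+it)|\ll_{f,g,\epsilon}1+|t|$. On $\mathrm{Re}(s)=-\epsilon$ I would use the functional equation of each factor: writing $\Lambda(\cdot,s)=\gamma(\cdot,s)L(\cdot,s)$ and using that all four $L$-functions in sight are self-dual, one has
\[
L(\cdot,-\epsilon+it)=(\text{root number})\cdot\frac{\gamma(\cdot,1+\epsilon-it)}{\gamma(\cdot,-\epsilon+it)}\,L(\cdot,1+\epsilon-it),
\]
in which the last factor is $O(1)$ while Stirling's formula bounds the $\gamma$-ratio by $\ll_{f,g,\epsilon}(1+|t|)^{d(\frac12+\epsilon)}$ for an $L$-function of degree $d$. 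The degrees here are $1$, $3$, $3$ and $9$, summing to $16$, so $\Phi(-\epsilon+it)\ll_{f,g,\epsilon}(1+|t|)^{16(\frac12+\epsilon)}$ and $|h(-\epsilon+it)|\ll_{f,g,\epsilon}(1+|t|)^{9+16\epsilon}$.

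Now apply \thmref{rthm} to $h$ on $a=-\epsilon\leq\sigma\leq b=1+\epsilon$, choosing a fixed $P>\epsilon$ so that $|P+\sigma+it|\asymp 1+|t|$ on the strip, with $\alpha=9+16\epsilon$ on $\sigma=a$ and $\beta=1$ on $\sigma=b$ (so indeed $\alpha\geq\beta$). The interpolated exponent at a general $\sigma$ is
\[
\alpha\,\frac{b-\sigma}{b-a}+\beta\,\frac{\sigma-a}{b-a}=\frac{(1+2\epsilon)+8(1+2\epsilon)(1+\epsilon-\sigma)}{1+2\epsilon}=1+8(1+\epsilon-\sigma),
\]
so $|h(\sigma+it)|\ll_{f,g,\epsilon}(1+|t|)^{1+8(1+\epsilon-\sigma)}$; dividing by $|s-1|\asymp 1+|t|$ yields $\Phi(\sigma+it)\ll_{f,g,\epsilon}(1+|t|)^{8(1+\epsilon-\sigma)}$ for $\frac{15}{16}+\epsilon\leq\sigma<1+\epsilon$, and multiplying by the bounded factor $U(\sigma+it)$ gives the claim. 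Equivalently, one may simply quote the standard convexity bound in the $t$-aspect for each of the four named $L$-functions and multiply; the total degree $16$ again produces the exponent $\tfrac{16}{2}(1+\epsilon-\sigma)=8(1+\epsilon-\sigma)$.

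The step I expect to demand the most care is the bound on the line $\mathrm{Re}(s)=-\epsilon$: it relies on the full complement of standard analytic facts --- functional equation, finite order, self-duality, and in particular the entirety of $L(sym^2f\otimes sym^2g,s)$ for distinct $f$ and $g$ --- being available for all four $L$-functions, together with a clean Stirling analysis of the $\gamma$-ratios keeping track of the correct degrees $1+3+3+9=16$. Once those are in place, the rest is bookkeeping with the Rademacher exponents.
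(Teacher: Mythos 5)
Your proposal is correct and follows essentially the same route as the paper: both rest on the factorization of \lemref{dec}, the boundedness of $U(s)$ in the stated half-plane, and Phragm\'en--Lindel\"of convexity for the remaining factors of total degree $1+3+3+9=16$, yielding the exponent $8(1+\epsilon-\sigma)$. The only cosmetic difference is that you apply Rademacher's theorem once to the full product $(s-1)\Phi(s)$ while the paper quotes the convexity bound factor by factor, and you are in fact more careful than the paper in noting that the pole of $\zeta(s)$ at $s=1$ must be excluded for the stated bound to hold uniformly.
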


\begin{proof}
The result follows from the convexity bound for the each factor of the $L$-function $L_{f,g}(s)$, which is as follows:
\begin{equation*}
\zeta(\sigma+it)\ll_{\epsilon} (1+|t|)^{\frac{1}{2}(1+\epsilon-\sigma)},
\end{equation*}
\begin{equation*}
L(sym^2f,\sigma+it)\ll_{f,\epsilon} (1+|t|)^{\frac{3}{2}(1+\epsilon-\sigma)},
\end{equation*}
\begin{equation*}
L(sym^2f\otimes sym^2g ,\sigma+it)\ll_{f,g,\epsilon} (1+|t|)^{\frac{9}{2}(1+\epsilon-\sigma)}.
\end{equation*}
Notice that the $U$ factor is absolutely convergent in the given range.
One can obtain the last two inequalities as like that for Riemann zeta function $\zeta(s)$, for that we need to use the functional 
equations for them. For the Gamma factors appearing in the functional equation for the $L$-functions 
$L(sym^2f, s)$ and $L(sym^2f\otimes sym^2g ,s)$ see, \cite[p. 137, eq. 5.100]{ik} and \cite[Theorem 12.1.4, p. 367]{gol}, respectively.
\end{proof}

We are now in a position to give a prove of \thmref{signc}.
\section{Proof of \thmref{signc}}
We give a prove of the first case since the other case can be handle exactly in the same fashion just by replacing $\lambda_f(n)\lambda_g(n)$ with $-\lambda_f(n)\lambda_g(n)$.

A special case of Ramakrishnan's modularity theorem on the Rankin-Selberg $L$-function \cite{rk} states that:
\textit{Suppose that $f$ and $g$ are two distinct Hecke-Maass cusp forms. Then there exists a cuspidal representation $\pi_{f\times g}$ on $GL_4(\mathbb{A}_{\mathbb{Q}})$ such that 
$$L(f\otimes g,s)=L(\pi_{f\times g},s).$$}
So, from \cite[Theorem 1.2]{jm} we have an upper bound 
\begin{equation}\label{1ub}
\sum_{x<n\leq 2x}\lambda_f(n)\lambda_g(n)\ll_{\epsilon} x^{\frac{12}{17}+\epsilon},
\end{equation}
for any $\epsilon>0$.

Now on the contrary, suppose $\lambda_f(n)\lambda_g(n)\geq 0$ for all $n \in (x, 2x]$. Then 
\begin{equation}\label{sl1}
\bigg(\sum_{x<n\leq 2x}\lambda_f(n)\lambda_g(n)\bigg)^2 \gg_{\alpha} \frac{1}{16 x^{\frac{7}{16}+4\alpha}}
\bigg(\sum_{x<n\leq 2x}\lambda_f^2(n)\lambda_g^2(n)\bigg)^2,
\end{equation}
holds for any $\alpha>0.$
On the other hand for $x\geq 2$, 
\begin{equation*}
\sum_{x<n\leq 2x}\lambda_f^2(n)\lambda_g^2(n)\log^2\bigg(\frac{x}{n}\bigg) \leq \log^{2}x\sum_{x<n\leq 2x}\lambda_f^2(n)\lambda_g^2(n).
\end{equation*}
Therefore in order to get a lower bound for the function $\sum_{x<n\leq 2x}\lambda_f(n)\lambda_g(n)$ it is sufficient to obtain a lower bound for $\sum_{x<n\leq 2x}\lambda_f^2(n)\lambda_g^2(n)\log^2\bigg(\dfrac{x}{n}\bigg)$.
From the Perron's formula \cite[p. 228, exercise 169]{ten}, we have
\begin{equation*}
\sum_{n\leq x}\lambda_f^2(n)\lambda_g^2(n)\log^2\bigg(\frac{x}{n}\bigg)=\frac{1}{\pi i}
\int_{1+\epsilon-i\infty}^{1+\epsilon+i\infty}L_{f,g}(s)\frac{x^s}{s^3}ds.
\end{equation*}
Since the integrand function $L_{f,g}(s)\dfrac{x^s}{s^3}$ is analytic in the region ${\rm{Re}}(s)> \frac{63}{64}-\delta$ for some $\delta>0$ except for a simple pole at $s=1$ with non-zero residue $c_{f,g}x$, since expect $\zeta(s)$ all other factors of $L_{f,g}(s)$
make sense and are non-zero at $s=1$ (see, preliminaries).
So if we move the line of integration to the line ${\rm{Re}}(s)=\frac{63}{64}$ and then use \propref{cbs}, we get
\begin{align}
\sum_{n\leq x}\lambda_f^2(n)\lambda_g^2(n)\log^2\bigg(\dfrac{x}{n}\bigg)&=c_{f,g}x+\frac{1}{\pi i}
\int_{\frac{63}{64}-i\infty}^{\frac{63}{64}+i\infty}L_{f,g}(s)\frac{x^s}{s^3}ds \notag\\
&=c_{f,g}x+O\bigg(x^{\frac{63}{64}}\int_{-\infty}^{\infty}\frac{(1+|t|)^{\frac{1}{8}}}{(1+t^2)^{3/2}}dt\bigg) \notag \\
&=c_{f,g}x+O(x^{\frac{63}{64}})\notag.
\end{align}

Note that the integrals over the horizontal segments $\sigma+iT ~(\frac{63}{64}\leq \sigma \leq 1+\epsilon)$ for $T\rightarrow
\infty$ goes to zero, by the convexity bound given in \propref{cbs} to $L_{f,g}(s)$ and due to the presence of the factor $s^3$
in the denominator. Therefore 
\begin{equation}\label{lb2}
\sum_{x<n\leq 2x}\lambda_f^2(n)\lambda_g^2(n)\gg \frac{x}{\log^2 x}.
\end{equation}
Now from \eqref{sl1} and \eqref{lb2}, we get
\begin{equation}\label{lb3}
\sum_{x<n\leq 2x}\lambda_f(n)\lambda_g(n)\gg \frac{x^{\frac{25}{32}-2\alpha}}{\log^2 x}
\end{equation}
which gives a contradiction to our assumption because \eqref{1ub} and \eqref{lb3} are not compatible with each other for suitably chosen $\epsilon >0, \alpha>0$ (specifically, $\epsilon+2\alpha<41/544$), and sufficiently large $x$. This finishes the proof.

\section{proof of \thmref{firstn}}
\noindent
Our idea of the proof is essentially same as that of \cite{ikg}. Throughout the section we assume $f$ and $g$ are as in  \thmref{firstn}.

To prove \thmref{firstn}, first we consider the sum 
\begin{equation*}\label{parf}
S(x):=\sum_{n\leq x}\lambda_f(n)\lambda_g(n)\log^2\bigg(\frac{x}{n}\bigg).
\end{equation*}
Then the desired result will follow from upper and lower bound estimates for $S(x)$
under the assumption that 
\begin{equation}\label{negative}
\lambda_f(n)\lambda_g(n)\geq 0 ~~{\rm{for~ all}} ~~n\leq x.
\end{equation}

\bigskip

From \propref{lowerb}, we have a lower bound estimate for the sum $S(x)$ provided  
$x\geq \exp \big(\tilde{c}\log ^2(\sqrt{{\rm{max}}\{q(sym^2 f),q(sym^2 g)\}})\big)$. The only point remains to achieve an upper bound
for $S(x)$. 
For that we need to get an 
estimate for the function $R(f,g,s)$ near the line ${\rm{Re}}(s)=\frac{1}{2}$. Although the idea is standard but to make the implied constants explicitly we proceed in detail.

\begin{prop}\label{fb}
Let $f$ and $g$ be two distinct normalized Hecke-Maass forms of weight zero for $SL_2(\mathbb{Z})$. Then for any $t\in \mathbb{R}$, we have
\begin{equation}\label{ff}
\zeta\bigg(\frac{5}{2}+2it\bigg)R\bigg(f,g, \frac{5}{4}+it\bigg)\ll 1,
\end{equation}
and 
\begin{equation}\label{mof}
\zeta\bigg(\frac{-1}{2}+2it\bigg)R\bigg(f,g, \frac{-1}{4}+it\bigg)\ll (1+u+v)^3 |1+it|^3.
\end{equation}
\end{prop}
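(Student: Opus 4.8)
The plan is to recognize both products as the Rankin--Selberg $L$-function $L(f\otimes g,s)$ evaluated off the critical strip. Comparing \eqref{rss} with \eqref{rf} gives the identity $\zeta(2s)R(f,g,s)=L(f\otimes g,s)$ (initially for $\mathrm{Re}(s)>1$, then everywhere by analytic continuation), so specializing $s=\tfrac54+it$ and $s=-\tfrac14+it$ identifies the two products in \eqref{ff} and \eqref{mof} with $L\big(f\otimes g,\tfrac54+it\big)$ and $L\big(f\otimes g,-\tfrac14+it\big)$, respectively. Estimate \eqref{ff} is then immediate: on the line $\mathrm{Re}(s)=\tfrac54$ the Euler product \eqref{rsp} converges absolutely, and the Kim--Sarnak bound $|\alpha_{f,i}(p)|,|\alpha_{g,j}(p)|\leq p^{7/64}$ yields $|\alpha_{f,i}(p)\alpha_{g,j}(p)p^{-(5/4+it)}|\leq p^{-33/32}$; hence $\big|\log L(f\otimes g,\tfrac54+it)\big|\leq 4\sum_p-\log(1-p^{-33/32})$, a convergent sum, and $L(f\otimes g,\tfrac54+it)\ll1$ with an absolute constant.

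For \eqref{mof} I would invoke the functional equation \eqref{fe}. Since $\Lambda(f\otimes g,s)=\gamma(f\otimes g,s)L(f\otimes g,s)$ and $\Lambda(f\otimes g,s)=\Lambda(f\otimes g,1-s)$, we have
\[
L(f\otimes g,s)=\frac{\gamma(f\otimes g,1-s)}{\gamma(f\otimes g,s)}\,L(f\otimes g,1-s).
\]
Taking $s=-\tfrac14+it$, so that $1-s=\tfrac54-it$ and $|L(f\otimes g,\tfrac54-it)|\ll1$ by the previous paragraph, it remains to bound the archimedean ratio $\big|\gamma(f\otimes g,1-s)/\gamma(f\otimes g,s)\big|$ at $s=-\tfrac14+it$; here the $\Gamma$-arguments have real parts $\tfrac{5/4+r}{2}$ and $\tfrac{-1/4+r}{2}$, both away from the poles of $\Gamma$, so the ratio is finite. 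The powers of $\pi$ contribute $\pi^{-2\mathrm{Re}(1-s)+2\mathrm{Re}(s)}=\pi^{-2+4\mathrm{Re}(s)}=\pi^{-3}$, a harmless constant, and the content is the product of the four $\Gamma$-quotients.

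Writing $\nu$ for each of the four shifts $u+v,\ u-v,\ -(u+v),\ -(u-v)$, the corresponding $\Gamma$-quotient is $\frac{\Gamma(\tfrac{5/4+r}{2}+i\tfrac{\nu-t}{2})}{\Gamma(\tfrac{-1/4+r}{2}+i\tfrac{\nu+t}{2})}$, which I would estimate by the uniform Stirling asymptotic $|\Gamma(\sigma+iy)|\asymp(1+|y|)^{\sigma-1/2}e^{-\pi|y|/2}$. This holds with absolute implied constants for all $y\in\mathbb{R}$ once $\sigma$ is confined to the finite set $\{\tfrac{5/4+r}{2},\tfrac{-1/4+r}{2}:r\in\{0,1\}\}$ (which avoids the poles of $\Gamma$), so the potentially awkward regime $\nu\approx\pm t$ causes no trouble. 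Two cancellations then finish the job. First, the exponential factors disappear when the four quotients are multiplied, because the shifts occur in pairs $\pm\nu$ and $(|\nu+t|-|\nu-t|)+(|{-\nu}+t|-|{-\nu}-t|)=0$. Second, in the surviving polynomial factor $\prod_\nu(1+|\tfrac{\nu-t}{2}|)^{(1/4+r)/2}(1+|\tfrac{\nu+t}{2}|)^{(5/4-r)/2}$ the two exponents sum to $\tfrac34$, \emph{independently of $r$}; since $|\nu|\leq u+v$, one has $1+\tfrac{|\nu|+|t|}{2}\leq(1+u+v)(1+|t|)$, so each quotient is $\ll\big((1+u+v)(1+|t|)\big)^{3/4}$ and the product over the four shifts is $\ll\big((1+u+v)(1+|t|)\big)^{3}$. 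Altogether $|L(f\otimes g,-\tfrac14+it)|\ll(1+u+v)^3(1+|t|)^3\ll(1+u+v)^3|1+it|^3$, which is \eqref{mof}.

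The main obstacle is precisely this last piece of bookkeeping: one must use the correct uniform form of Stirling so that $\Gamma$-arguments of small imaginary part contribute nothing extra, verify that the exponential factors really telescope in conjugate pairs, and --- above all --- track the exponents carefully enough to see that the net polynomial growth is exactly $(1+u+v)^3|1+it|^3$ rather than a larger power. That the exponent is $3$ and not, say, $5$ rests on the cancellation of $r$ in the difference of real parts $\tfrac{5/4+r}{2}-\tfrac{-1/4+r}{2}=\tfrac34$.
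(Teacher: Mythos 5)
Your proposal is correct and follows essentially the same route as the paper: the trivial bound on $\mathrm{Re}(s)=\tfrac54$ via Kim--Sarnak and absolute convergence, then the functional equation \eqref{fe} together with Stirling's formula to bound the ratio of gamma factors by $(1+u+v)^3|1+it|^3$. Your bookkeeping of the $\Gamma$-quotient exponents (each contributing $\tfrac34$, with the $r$-dependence cancelling) is just a more explicit version of the estimate the paper records in its display \eqref{gb}.
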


\begin{proof}
The inequality \eqref{ff} trivially holds because the series $\zeta(2s)$ and $R(f,g,s)$ are absolutely convergent in the region ${\rm{Re}}(s)\geq \frac{5}{4}$. To show that the series $R(f,g,s)$ converges absolutely in the region
${\rm{Re}}(s)\geq \frac{5}{4}$, we have used the Kim-Sarnak bound.
 
To derive inequality \eqref{mof}, we use the functional equation \eqref{fe} for the function $L(f\otimes g,s)$.
So from \eqref{fe}, we have
\begin{align*}
\zeta\bigg(\frac{-1}{2}+2it\bigg)R\bigg(f,g, \frac{-1}{4}+it\bigg)
=&(2\pi)^{2-2s}\frac{\Gamma\bigg(\frac{5}{8}+\frac{r}{2}+i\frac{u+v-t}{2}\bigg)}{\Gamma\bigg(\frac{-1}{8}+\frac{r}{2}-i\frac{u+v-t}{2}\bigg)} \frac{\Gamma\bigg(\frac{5}{8}+\frac{r}{2}+i\frac{u-v-t}{2}\bigg)}{\Gamma\bigg(\frac{-1}{8}+\frac{r}{2}-i\frac{u-v-t}{2}\bigg)} \notag\\ 
&\times
\frac{\Gamma\bigg(\frac{5}{8}+\frac{r}{2}-i\frac{u+v+t}{2}\bigg)}{\Gamma\bigg(\frac{-1}{8}+\frac{r}{2}+i\frac{u+v+t}{2}\bigg)}\frac{\Gamma\bigg(\frac{5}{8}+\frac{r}{2}-i\frac{u-v+t}{2}\bigg)}{\Gamma\bigg(\frac{-1}{8}+\frac{r}{2}+i\frac{u-v
+t}{2}\bigg)}\\
&\times
\zeta\bigg(\frac{5}{2}-2it\bigg)R\bigg(f,g, \frac{5}{4}-it\bigg).
\end{align*}

Now using the Stirling's formula \cite[p. 15, problem 8]{leb}, we get
\begin{align}\label{gb}
\frac{\Gamma\bigg(\frac{5}{8}+\frac{r}{2}+i\frac{u+v-t}{2}\bigg)}{\Gamma\bigg(\frac{-1}{8}+\frac{r}{2}-i\frac{u+v-t}{2}\bigg)} \frac{\Gamma\bigg(\frac{5}{8}+\frac{r}{2}+i\frac{u-v-t}{2}\bigg)}{\Gamma\bigg(\frac{-1}{8}+\frac{r}{2}-i\frac{u-v-t}{2}\bigg)}
\frac{\Gamma\bigg(\frac{5}{8}+\frac{r}{2}-i\frac{u+v+t}{2}\bigg)}{\Gamma\bigg(\frac{-1}{8}+\frac{r}{2}+i\frac{u+v+t}{2}\bigg)}\frac{\Gamma\bigg(\frac{5}{8}+\frac{r}{2}-i\frac{u-v+t}{2}\bigg)}{\Gamma\bigg(\frac{-1}{8}+\frac{r}{2}+i\frac{u-r+t}{2}\bigg)}& \notag \\
\ll (1+u+v)^3 |1+it|^3.
\end{align}
Therefore using inequalities \eqref{ff} and \eqref{gb} in the above functional equation, we get inequalities \eqref{mof}.
\end{proof}

Our next proposition gives convexity bound for the Rankin-Selberg $L$-function $L(f\otimes g,s)$ using the Rademacher's \thmref{rthm} and \propref{fb}.

\begin{prop}
Let $f$ and $g$ be two distinct normalized Hecke-Maass cusp forms for $SL_2(\mathbb{Z})$. Then for any $t\in \mathbb{R}$ and $\frac{-1}{4}< \sigma< \frac{5}{4}$, one has
\begin{equation}\label{subc}
\zeta(2\sigma+2it) R(f,g,\sigma+it)\ll (1+u+v)^{2\left(\frac{5}{4}-\sigma\right)} (3+|t|)^{2\left(\frac{5}{4}-\sigma\right)}.
\end{equation}
\end{prop}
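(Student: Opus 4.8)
The plan is to apply Rademacher's Theorem~\ref{rthm} to the function $h(s):=\zeta(2s)R(f,g,s)$ on the strip $-\tfrac14\le\sigma\le\tfrac54$, using Proposition~\ref{fb} to control $h$ on the two boundary lines. The first point to stress is that, although $R(f,g,s)$ itself has poles (at $s=\rho/2$ for the nontrivial zeros $\rho$ of $\zeta$), the product $h(s)=\zeta(2s)R(f,g,s)$ equals $L(f\otimes g,s)$ for $\mathrm{Re}(s)>1$ by \eqref{rss} and \eqref{rf}, and since $f\ne g$ this Rankin--Selberg $L$-function is entire by Theorem~\ref{rst}(2). Hence $h$ extends to an entire function; via the functional equation \eqref{fe} and Stirling's formula it is polynomially bounded on $\mathrm{Re}(s)=-\tfrac14$ and bounded on $\mathrm{Re}(s)=\tfrac54$, so it is of finite order on the strip. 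Thus the standing hypotheses of Theorem~\ref{rthm} (continuity on the closed strip, holomorphy and finite order on the interior) are met.

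Next I would cast the two boundary estimates into the precise shape Rademacher demands, with $a=-\tfrac14$, $b=\tfrac54$ and $P=3$ (so that $P+a=\tfrac{11}{4}>0$ and $b-a=\tfrac32$). On the right edge, \eqref{ff} gives $|h(\tfrac54+it)|\ll 1$, which I write as $F\,|P+\tfrac54+it|^{\beta}$ with $\beta=0$ and $F$ an absolute constant. On the left edge, \eqref{mof} gives $|h(-\tfrac14+it)|\ll(1+u+v)^{3}|1+it|^{3}$; since $|1+it|\le|\tfrac{11}{4}+it|=|P+a+it|$, this is of the form $E\,|P+a+it|^{\alpha}$ with $\alpha=3$ and $E=C_{0}(1+u+v)^{3}$ for an absolute constant $C_{0}$. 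The last requirement $\alpha\ge\beta$ holds since $3\ge0$.

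Rademacher's theorem then yields, for all $-\tfrac14<\sigma<\tfrac54$ and all $t\in\mathbb R$,
\begin{equation*}
|h(\sigma+it)|\le\Bigl(E\,|P+\sigma+it|^{3}\Bigr)^{\frac{5/4-\sigma}{3/2}}\Bigl(F\Bigr)^{\frac{\sigma+1/4}{3/2}}.
\end{equation*}
Since $3\cdot\tfrac{5/4-\sigma}{3/2}=2\bigl(\tfrac54-\sigma\bigr)$, the $(1+u+v)$-power comes out as $(1+u+v)^{2(5/4-\sigma)}$, while the factors $C_{0}^{(5/4-\sigma)/(3/2)}$ and $F^{(\sigma+1/4)/(3/2)}$ are $\ll1$ because their exponents stay in a bounded range on the strip. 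Finally $|P+\sigma+it|=|3+\sigma+it|\le(3+\sigma)+|t|\le\tfrac{17}{12}(3+|t|)$ for $\sigma<\tfrac54$, and raising to the bounded exponent $2(\tfrac54-\sigma)<3$ costs only an absolute constant; hence $|h(\sigma+it)|\ll(1+u+v)^{2(5/4-\sigma)}(3+|t|)^{2(5/4-\sigma)}$, which is exactly \eqref{subc}.

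The one genuinely non-routine step — and the part I would take most care over — is the reduction at the start: one must feed Rademacher the product $\zeta(2s)R(f,g,s)=L(f\otimes g,s)$ rather than $R(f,g,s)$ alone, because it is precisely the multiplication by $\zeta(2s)$ that removes the poles of $R(f,g,s)$ and produces an entire function to which Phragmén--Lindelöf interpolation applies. Everything after that is bookkeeping: matching the boundary bounds of Proposition~\ref{fb} to the normalized form $E|P+a+it|^{\alpha}$, $F|P+b+it|^{\beta}$, and tracking the exponents and absolute constants through the interpolation.
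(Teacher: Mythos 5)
Your proposal is correct and follows essentially the same route as the paper: apply Rademacher's Theorem~\ref{rthm} to $h(s)=\zeta(2s)R(f,g,s)=L(f\otimes g,s)$ on the strip $-\tfrac14\leq\sigma\leq\tfrac54$ with the boundary bounds of Proposition~\ref{fb}, $\alpha=3$, $\beta=0$, $E\asymp(1+u+v)^3$. The only (immaterial) difference is your choice $P=3$ versus the paper's $P=\tfrac54$, which makes $|P+a+it|=|1+it|$ match \eqref{mof} without the extra comparison step; your version of the bookkeeping is if anything more carefully justified.
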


\begin{proof}
From \thmref{rst}, the function $\zeta(2s)R(f,g,s)$ 
satisfy all the necessary conditions for \thmref{rthm} with $a=\frac{-1}{4}$ and $b=\frac{5}{4}$.
Furthermore from \propref{fb}, we have 
\begin{equation*}
P=\frac{5}{4},~~~~~~~ E=C_1(1+u+v)^3, ~~~~~~F=C_2
\end{equation*}
\begin{equation*}
\alpha=3,~~~~~~ \beta=0,
\end{equation*}
where $C_1$ and $C_2$ are absolute constants.
After substituting these values in \thmref{rthm}, we get
\begin{equation*}
\zeta(2\sigma+2it) R(f,g,\sigma+it)\ll (1+u+v)^{2\left(\frac{5}{4}-\sigma\right)} (3+|t|)^{2\left(\frac{5}{4}-\sigma\right)},
\end{equation*}
for $\frac{-1}{4}< \sigma< \frac{5}{4}$, and hence this completes the proof.
\end{proof}

\vspace{.2in}
\noindent
After substituting $\sigma=\frac{1}{2}+\delta$ and $\zeta(1+2\delta+2it)^{-1}\ll_{\delta} 1$ for any $\delta>0$ in \eqref{subc}, we get the following immediate corollary.
\begin{cor}\label{mc}
For any $t \in \mathbb{R}$ and any $0<\delta\leq 3/4$, we have
\begin{equation}
R\left(f,g,\frac{1}{2}+\delta+it\right)\ll_{\delta} (1+u+v)^{\left(\frac{3}{2}-2\delta\right)}(3+|t|)^{\left(\frac{3}{2}-2\delta\right)}.
\end{equation}
\end{cor}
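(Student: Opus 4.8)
The plan is to read this off directly from the convexity estimate \eqref{subc} of the preceding proposition, by dividing out the Riemann zeta factor. First I would specialise $\sigma=\frac{1}{2}+\delta$ in \eqref{subc}. For $0<\delta<\frac{3}{4}$ this value lies strictly inside the strip $-\frac{1}{4}<\sigma<\frac{5}{4}$, so \eqref{subc} is applicable; noting that $2\sigma+2it=1+2\delta+2it$ and $2\left(\frac{5}{4}-\sigma\right)=\frac{3}{2}-2\delta$, it gives
\begin{equation*}
\zeta(1+2\delta+2it)\,R\Bigl(f,g,\tfrac{1}{2}+\delta+it\Bigr)\ll(1+u+v)^{\frac{3}{2}-2\delta}(3+|t|)^{\frac{3}{2}-2\delta}.
\end{equation*}

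Next I would bound the zeta value from below. Since $\mathrm{Re}(1+2\delta+2it)=1+2\delta>1$, both $\zeta(s)$ and $\zeta(s)^{-1}=\sum_{n\geq 1}\mu(n)n^{-s}$ are represented there by absolutely convergent Dirichlet series, and therefore
\begin{equation*}
\bigl|\zeta(1+2\delta+2it)^{-1}\bigr|\leq\sum_{n\geq 1}n^{-(1+2\delta)}=\zeta(1+2\delta)\ll_{\delta}1.
\end{equation*}
Multiplying the previous display by $\zeta(1+2\delta+2it)^{-1}$ yields the asserted inequality for all $0<\delta<\frac{3}{4}$, with the $\delta$-dependence of the implied constant coming entirely from the factor $\zeta(1+2\delta)$ (which blows up as $\delta\to 0$).

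Finally I would dispose of the endpoint $\delta=\frac{3}{4}$, i.e.\ $\sigma=\frac{5}{4}$, which is excluded from the range of validity of \eqref{subc}. There the statement is trivial: by the Kim--Sarnak bound \eqref{ks} the series $R(f,g,s)$ converges absolutely on the line $\mathrm{Re}(s)=\frac{5}{4}$, so $R(f,g,\tfrac{5}{4}+it)\ll 1$, while the right-hand side of the claimed inequality equals $(1+u+v)^{0}(3+|t|)^{0}=1$; hence the bound holds. I do not anticipate any genuine obstacle in this argument---it is essentially a one-line division---the only point needing a little care being to record precisely where and how the dependence on $\delta$ enters, and to note that the $\delta=3/4$ endpoint must be handled by a separate (trivial) appeal to absolute convergence rather than by \eqref{subc} itself.
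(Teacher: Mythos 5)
Your proposal is correct and follows essentially the same route as the paper: substitute $\sigma=\tfrac12+\delta$ into \eqref{subc} and divide by $\zeta(1+2\delta+2it)$, which is $\gg_\delta 1$ since its argument has real part $1+2\delta>1$. Your extra care at the endpoint $\delta=\tfrac34$ (where \eqref{subc} does not strictly apply and one falls back on absolute convergence via Kim--Sarnak) is a detail the paper glosses over, but it is handled correctly.
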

\vspace{.5cm}
Now we derive an upper bound for $S(x)$.

\begin{prop}\label{upperb}
Let $f$ and $g$ be two distinct normalised Hecke-Maass forms for $SL_2(\mathbb{Z})$. Then for any $0<\delta\leq 3/4 $, we have
\begin{equation}
S(x)\ll_{\delta} (1+u+v)^{\left(\frac{3}{2}-2\delta\right)}x^{\frac{1}{2}+\delta}.
\end{equation}
\end{prop}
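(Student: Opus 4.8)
The plan is to pass to an integral representation of $S(x)$ via Perron's formula and then to move the contour down to the line $\operatorname{Re}(s)=\tfrac12+\delta$, where the convexity bound of \corref{mc} is available.

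First I would record the (exact, error-term-free) Perron identity attached to the kernel $x^{s}/s^{3}$. Since $R(f,g,s)$ is absolutely convergent for $\operatorname{Re}(s)\ge\tfrac54$ (by \propref{fb}, or directly from the Kim--Sarnak bound \eqref{ks}), term-by-term integration exactly as in the proof of \thmref{signc} gives
\[
S(x)=\frac{1}{\pi i}\int_{\frac32-i\infty}^{\frac32+i\infty}R(f,g,s)\,\frac{x^{s}}{s^{3}}\,ds ,
\]
the interchange of sum and integral being justified because $\sum_{n}|\lambda_f(n)\lambda_g(n)|\,n^{-3/2}<\infty$ and $\int_{(3/2)}|s|^{-3}\,|ds|<\infty$.

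Next I would shift the line of integration to $\operatorname{Re}(s)=\tfrac12+\delta$ for the prescribed $0<\delta\le\tfrac34$. This is the point at which the hypothesis $f\ne g$ is used: by \thmref{rst} the function $L(f\otimes g,s)$ is entire, while $\zeta(2s)$ is zero-free (indeed bounded away from $0$) in the strip $\tfrac12+\delta\le\operatorname{Re}(s)\le\tfrac32$ because there $\operatorname{Re}(2s)\ge 1+2\delta>1$; hence $R(f,g,s)=L(f\otimes g,s)\zeta(2s)^{-1}$ is holomorphic throughout this strip, with in particular no pole at $s=1$. The kernel $x^{s}/s^{3}$ has its only pole at $s=0$, which lies to the left of the new line, so the shift produces no residue. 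The contributions of the horizontal segments $\sigma\pm iT$, $\tfrac12+\delta\le\sigma\le\tfrac32$, tend to $0$ as $T\to\infty$: the convexity bound \eqref{subc}, divided by $|\zeta(2\sigma+2iT)|\gg_{\delta}1$, shows that $R(f,g,\sigma+iT)$ grows at most polynomially in $T$, which is overwhelmed by the factor $s^{-3}$. Having reached the line $\operatorname{Re}(s)=\tfrac12+\delta$, I would insert \corref{mc} to obtain
\[
S(x)\ll_{\delta}(1+u+v)^{\frac32-2\delta}\,x^{\frac12+\delta}\int_{-\infty}^{\infty}\frac{(3+|t|)^{\frac32-2\delta}}{\bigl|\tfrac12+\delta+it\bigr|^{3}}\,dt ,
\]
and the remaining integral converges to a constant depending only on $\delta$, its integrand being $\ll(1+|t|)^{-\frac32-2\delta}$ with $\tfrac32+2\delta>1$. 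This is precisely the asserted estimate.

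I do not expect a serious obstacle: all of the analytic substance — the meromorphy and functional equation of $L(f\otimes g,s)$ and the convexity bound \corref{mc} (equivalently \eqref{subc}) — is already in place. The only two points needing care are the legitimacy of the contour shift, and in particular the observation that distinctness of $f$ and $g$ removes the potential pole of $R(f,g,s)$ at $s=1$, and the decay of the horizontal pieces, which follows from the polynomial growth in \eqref{subc} against the factor $s^{-3}$.
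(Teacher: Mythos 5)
Your proposal is correct and follows essentially the same route as the paper: Perron's formula with the kernel $x^{s}/s^{3}$, a contour shift to $\operatorname{Re}(s)=\tfrac12+\delta$ with no residues picked up (since $f\neq g$ makes $L(f\otimes g,s)$ entire and $\zeta(2s)$ is zero-free there), and then \corref{mc} to bound the resulting integral. You in fact justify the holomorphy of $R(f,g,s)$ in the strip and the decay of the horizontal segments more explicitly than the paper does; the only cosmetic difference is starting the Perron integral at $\operatorname{Re}(s)=\tfrac32$ rather than $\tfrac54$, which is immaterial.
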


\begin{proof}
From the Perron's formula \cite[p. 228, Exercise 169]{ten}, we have
\begin{equation*}
\sum_{n\leq x}\lambda_f(n)\lambda_g(n)\log^2\bigg(\frac{x}{n}\bigg)=\frac{1}{\pi i}
\int_{5/4-i\infty}^{5/4+i\infty}R(f,g,s)\frac{x^s}{s^3}ds.
\end{equation*}
Since the integrand function $R(f,g,s)\dfrac{x^s}{s^3}$ is analytic in the region ${\rm{Re}}(s)\geq 1/2+\delta$. So if we move the line of integration to the line $1/2+\delta$ and then use \corref{mc}, we get
\begin{align}
S(x)=&\frac{1}{\pi i}
\int_{1/2+\delta-i\infty}^{1/2+\delta+i\infty}R(f,g,s)\frac{x^s}{s^3}ds \notag\\
& \ll(1+u+v)^{\left(\frac{3}{2}-2\delta\right)}x^{\frac{1}{2}+\delta}\int_{-\infty}^{\infty}\frac{(3+|t|)^{\left(\frac{3}{2}-2\delta\right)}}{(1+t^2)^{3/2}}dt \notag \\
& \ll (1+u+v)^{\left(\frac{3}{2}-2\delta\right)}x^{\frac{1}{2}+\delta} \notag.
\end{align}
Hence this completes the proof of the proposition.
\end{proof}
\vspace{.2in}
\noindent
We are now in a position to complete the proof of \thmref{firstn}.
\begin{proof}
Let $x\geq exp \big(\tilde{c}\log ^2(\sqrt{{\rm{max}}\{q(sym^2 f),q(sym^2 g)\}})\big)$ be a real number such that\\ 
$\lambda_f(n)\lambda_g(n)\geq 0$ for all $n\leq x$.
Then from \propref{upperb} and \propref{lowerb}, we infer that
\begin{equation*}
\frac{x^{25/32}}{\log^2x}\ll S(x)\ll (1+u+v)^{\left(\frac{3}{2}-2\delta\right)}x^{\frac{1}{2}+\delta}.
\end{equation*}
\begin{equation*}
x^{9/32-2\delta}\ll (1+u+v)^{\left(\frac{3}{2}-2\delta\right)}.
\end{equation*}
After substituting $\delta=\frac{1}{5208}$ in the last inequality, we get the required result.
\end{proof}

\begin{acknowledgements}
 The authors would like to thank the referee for numerous useful comments and substantial
 corrections.
\end{acknowledgements}

\end{document}